\newcommand{\nc}{\newcommand}
\newcommand{\rnc}{\renewcommand}
\nc{\KK}{\mbox{\rm I$\!$K}}
\nc{\dsp}{\displaystyle}
\nc{\bequ}{\begin{equation}}
\nc{\eequ}{\end{equation}}
\nc{\barr}{\begin{array}}
\nc{\earr}{\end{array}}
\nc{\Cm}{\Bbb C}
\nc{\Zm}{\Bbb Z}
\nc{\xy}{\mbox{$(x,y)$}}
\nc{\uv}{\mbox{$(u,v)$}}
\nc{\argsh}{\mbox{\rm argsh}}
\nc{\argch}{\mbox{\rm argch}}
\nc{\argth}{\mbox{\rm argth}}
\nc{\dilog}{\mbox{\rm dilog}}
\nc{\Ci}{\mbox{\rm Ci}}
\nc{\Si}{\mbox{\rm Si}}
\nc{\Ei}{\mbox{\rm Ei}}
\nc{\ch}{\mbox{\rm ch}}
\nc{\tgh}{\mbox{\rm th}}
\nc{\sh}{\mbox{\rm sh}}
\nc{\xt}{\widetilde{x}}
\nc{\xyz}{\mbox{$(x,y,z)$}}
\nc{\derpar}[2]{\frac{\partial #1}{\partial #2}}
\rnc{\d}[2]{\dps\frac{\mbox{d} #1}{\mbox{d} #2}}
\nc{\tend}[2]{\dps \lim_{#1 \rightarrow #2}}
\nc{\rdeu}{\mbox{\rm I$\!$R$^2$ }}
\nc{\RR}{\mbox{\rm I$\!$R}}
\nc{\NN}{\mbox{\rm I$\!$N}}
\nc{\rdeuplus}{\mbox{\rm I$\!$R$^2_{+}$}}
\nc{\quest}{\em}                          
\nc{\info}{\tt}
\nc{\beq}{\begin{equation}}
\nc{\eeq}{\end{equation}}
\nc{\soitf}[1]{{Soit $#1$ la fonction d\accent19efinie par :}}
\nc{\bfq}[1]{{\bf (#1) ---}}
\nc{\cc}{{\cal C}}
\nc{\Dprim}{{\cal D}^{'}}
\nc{\DD}{{\cal D}}
\nc{\eps}{\varepsilon}
\nc{\dps}{\displaystyle}
\newtheorem{exo}{\large \bf {Exercice}}
\nc{\nexo}{\begin{exo} \mbox{} \end{exo}}
\nc{\hK}{\hat K}
\nc{\hf}{\hat f}
\nc{\hvh}{\hat v_h}
\newtheorem{theo}{Theorem}
\newtheorem{Cor}{Corollary}
\newtheorem{rem}{Remark}
\nc{\fleche}{\mbox{
\LARGE $ \rule[1.3mm]{3.2cm}{.011in}\hspace*{-1.1cm}\longrightarrow$}
}
\nc{\norm}{\vec{n}}
\nc{\Es}{\widetilde{E}^s}
\nc{\Ex}{\widetilde{E}_x}
\nc{\Ev}{\rm \bf E}
\nc{\Evtil}{{\rm \bf \widetilde{E}}}
\nc{\ex}{\widetilde{e}_x}
\nc{\Estar}{E_x^\star}
\nc{\Hs}{\widetilde{H}^s}
\nc{\Ef}{{\cal E}}
\nc{\Som}{{\cal S}}
\nc{\Eff}{\widetilde{\cal E}}
\nc{\Hf}{{\cal H}}
\nc{\dv}{\mbox{ div }}
\nc{\dvg}{\mbox{ div}_\Gamma}
\nc{\rtg}{\vec{\mbox{rot}}_\Gamma}
\nc{\G}{{\cal G}}
\nc{\J}{{\cal J}}
\nc{\K}{{\cal K}}
\nc{\Ah}{\mbox{ \boldmath $A_h$}}
\nc{\Ih}{\mbox{ \boldmath $I_h$}}
\nc{\rot}{\mbox{\rm rot}}
\nc{\rotv}{{\bf \rm \rot}}
\nc{\kv}{\vec{k}}
\nc{\vv}{\vec{v}}
\nc{\modk}{\left| \kv \right|}
\nc{\lscal}{\left(  \hskip-1.2pt   \left(}
\nc{\rscal}{\right)  \hskip-1.2pt  \right)_h}
\nc{\Pt}{\varphi}
\nc{\Vt}{\mbox{\boldmath $\psi$}}
\newcommand{\norma}[1]{\left\|#1\right\|}
\def\div{\mathop{\mathrm{div}}\nolimits}
\def\grad{\mathop{\boldsymbol{\mathrm{grad}}}\nolimits}
\def\bV{\boldsymbol{V}}
\def\bVs{\boldsymbol{V}^{\star}}
\def\Vs{V^{\star}}
\def\Ps{P^{\star}}
\def\disc{\mathrm{disc}}
\def\L{\mathrm{L}}
\def\cC{\mathcal{C}}
\def\div{\mathop{\mathrm{div}}\nolimits}
\def\grad{\mathop{\boldsymbol{\mathrm{grad}}}\nolimits}
\def\bV{\boldsymbol{V}}
\def\bVs{\boldsymbol{V}^{\star}}
\def\Vs{V^{\star}}
\def\Ps{P^{\star}}
\def\disc{\mathrm{disc}}
\def\L{\mathrm{L}}
\def\cC{\mathcal{C}}
\thanks{eliane.becache@inria.fr, POEMS (CNRS:UMR 7231 - ENSTA - INRIA Rocquencourt),  INRIA, Domaine de Voluceau-Rocquencourt, BP
   105, F-78153 Le Chesnay C\'edex}%
\thanks{andres.prieto@usc.es, Departamento de Matem\'atica Aplicada, 
Universidade de Santiago de Compostela, 15782 Santiago de Compostela, Spain}%
\begin{document}
\bibliographystyle{plain}

\RRNo{7620}

\makeRR

\section{Introduction}
The Perfectly Matched Layers (PML) technique, introduced in 
1994 by B\'erenger \cite{Ber94} for electromagnetic problems, is
considered as an efficient tool to simulate numerically wave propagation
problems stated in unbounded domains. In fact, during the last decade, 
this technique has been intensively applied in a wide range of areas
(acoustic and electromagnetic problems 
\cite{Rappaport:1995,Zhao:1996,pezhca98,Petropoulos:2000,
Collino:1998:1,Collino:1998:2},
elastodynamics
\cite{Col-Tso2,BFJ:2003,BasuChopra:pml-elas:04,ApKr06},
aeroacoustics
\cite{Hesthaven:1998,Hu:1996,Hu:2001,DJ-euler:03,Hag-pml03,
ApKr-pmleuler03,Nataf-JCP05,appelo2007pml}
among  others).

The mathematical analysis of the 
well-posedness and  stability of the continuous PML models (both 
in the original split formulation and in unsplit formulations) 
have been addressed in several works. Let us cite e.g. 
\cite{ab-got:97,Abarbanel:1999,rahm:phd,LVM:02} 
for the  well-posedness analysis and 
\cite{BJ:02,Hu:2001,BFJ:2003,BPG:04} for the  stability analysis.
It is now well-known that the original split  PML model,
as well as unsplit PML models, are (at least weakly) stable if
the original physical model is isotropic.
However, if the physical model is anisotropic then
the PML technique can lead to  unstable behaviors
(see \cite{BFJ:2003} for more details).

On the other hand, a natural question is the analysis of the stability of 
fully discrete schemes used for discretizing PMLs. In \cite{BJ:02}, 
this question has been considered for the discretization of isotropic 
Maxwell's equations. Precisely, it is shown that the Yee
scheme, applied for discretizing both  the  split and the unsplit PMLs, 
is stable under the standard CFL stability condition. 
This result is obtained for a layer in only one direction. When
considering Cartesian PMLs with layers in two (or three) directions, 
it is then natural to address the question of stability in corners. 
It is well known that this question is delicate for Absorbing Boundary 
Conditions (ABCs) and has been considered by several authors 
(e.g., \cite{Engq-Majd-77,Ramahi99}, and \cite{Giv-Hag-Pat:06} where 
long time instabilities for high order ABCs are mentioned).  

The main goal of the present paper is to study if instabilities could be
generated from the corner PML domains in 2D, as it is the case for some
ABCs in corners. The first result is that, on the continuous level,
PML corners are always stable (even in anisotropic models
assuming that the PML parameters for each direction are constant
and equal). On the discrete level, it is shown that
instabilities are related to an inadequate
time discretization of the auxiliary differential equations in the 
Cartesian PML formulation. In fact, in the context of the isotropic
acoustic model, we analyze two different time discretizations whose
CFL stability conditions are different only in the corner domains. 
Although both discrete scheme are consistent with the continuous model, 
only one has a CFL condition independent of the PML parameters, 
which coincides with the standard CFL of the scheme in the physical domain. 

Following is the outline of the paper. In Section~\ref{sec:2}, 
we describe the split B\'erenger's PML
formulation, written in Cartesian coordinates for a general first order 
hyperbolic system in two dimensions. We focus our attention on the 
equation system stated only in a corner domain. It is shown that, 
in contrast to a  layer in only one direction, PMLs are always stable 
in a corner, at least for a constant damping factor. In particular, 
it means that, even for anisotropic models, the split PMLs are stable 
at the continuous level. 

Section~\ref{sec:model} is devoted to the introduction of a model problem, 
the two dimensional wave equation, written as a first-order pressure-velocity 
system. The PML model considered here is the Zhao-Cangellaris unsplit formulation. 
Again, if the PML coefficients are assumed constant in the corner domain
(but possibly different), it is shown, via energy estimates, that the continuous 
model is stable.

The spatial semi-discretization, presented in Section~\ref{sec:approx-space}, 
is done with a spectral mixed finite element method based  on a quadrilateral 
mesh, used in~\cite{fauqueux}.

Section~\ref{sec:approx-time} is devoted to the time discretization, 
using explicit second-order finite differences. We first consider the 
scheme used in \cite{fauqueux}. A stability analysis of the scheme shows 
that the CFL condition  is deteriorated in the PML corner, when compared
to the CFL condition inside the fluid domain, the deterioration depending 
(in particular) on the damping factor. In order to avoid this, a new 
discretization in time in the PML corner is proposed, for which it is 
shown  that the CFL condition remains the same as in the fluid domain. 
These results are illustrated in Section~\ref{sec:num} with numerical 
simulations. 

\section{Stability of B\'erenger's splitted PMLs in corners for a
general hyperbolic system}\label{sec:2}

{\bf Notation.} Through the rest of the paper,  standard notations 
about functional Sobolev spaces 
are used without explicit definitions, 
$\norma{\cdot}_{L^2}$ and $(\cdot,\cdot)_{L^2}$ denote respectively the $L^{2}$-norm and the $L^{2}$-inner product.

In \cite{Col-Tso2}, the authors have shown how to design a splitted PML 
model, for a general first-order hyperbolic system. This construction 
has been applied in~\cite{BFJ:2003} for analyzing the well-posedness and 
the stability of a layer in one direction. In this section, we focus on 
the PML equations written in a corner domain. Following~\cite{Col-Tso2,BFJ:2003}, 
we consider the  first-order hyperbolic general Cauchy problem in the two 
dimensional free space:
\begin{align*}
& \partial_{t}  U-A_{x}\partial_{x}U-A_{y}\partial_{y}U=0,\\
&U(.,0)=U^{0},
\end{align*} 
where $U$ is a $m$ dimensional real-valued vector function 
$U:(x,y,t)\in \RR^2 \times \RR^+ \mapsto U(x,y,t) \in \RR^m$, 
$A_{x}$ and $A_{y}$  are $m \times m$ real-valued  symmetric matrices, 
and the initial data $U^{0}$ is a $m$ dimensional real-valued vector 
function $U^0: (x,y)\in \RR^2 \mapsto U^0(x,y) \in \RR^m$. 
It is then classical to show that the solution satisfies the energy conservation 
\[
\dps \frac{d}{dt}\norma{U}_{L^2}^2=0.
\]

We now introduce the splitted B\'erenger's PMLs equations in Cartesian
coordinates (\cite{Ber94,Col-Tso2}) using the splitting trick
for the corner domain: \\
{\it Find $(U^{x},U^{y})$ solution of the first-order hyperbolic system}
\begin{align}
\label{eq:pml_corner_1}& \partial_{t} U^{x}+\sigma_{x}U^{x}-A_{x}\partial_{x}U=0,
\\
\label{eq:pml_corner_2}&\partial_{t} U^{y}+\sigma_{y}U^{y}-A_{y}\partial_{y}U=0,
\\
\label{eq:pml_corner_3}&U=U^{x}+U^{y},
\\
\label{eq:pml_corner_4}& U^{x}(.,0)=(U^{x})^{0},\ U^{y}(.,0)=(U^{y})^{0}.
\end{align}

It is straightforward to verify the following
\begin{theo}
If $\sigma_{x}$ and $\sigma_{y}$ are constant and equal to $\sigma$ 
then the solution $(U^{x},U^{y})$ of system 
(\ref{eq:pml_corner_1})-(\ref{eq:pml_corner_4}) satisfies
\begin{equation}
\label{eq:nrj-1}
\frac{d}{dt}\norma{U}_{L^2}^2=-2\sigma\norma{U}_{L^2}^2\le 0.
\end{equation}
In this case, the PML corner is strongly stable in the sense that the
solution can be bounded by the initial data, uniformly in time:
\begin{equation}
\label{eq:est-1}
\exists C>0, \quad \left\| U (t)\right\|_{L^2} \le C \left\| U^0 \right\|_{L^2},
\quad \forall t >0. 
\end{equation}

\end{theo}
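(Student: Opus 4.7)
The theorem's strength lies in the hypothesis $\sigma_x = \sigma_y = \sigma$, so my plan is to exploit this common damping coefficient to decouple the splitting and reduce the problem to a single damped transport equation for the unsplit field $U$.

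First I would add equations (\ref{eq:pml_corner_1}) and (\ref{eq:pml_corner_2}) and use (\ref{eq:pml_corner_3}): since $\sigma_x U^x + \sigma_y U^y = \sigma(U^x+U^y) = \sigma U$, the sum telescopes into
\[
\partial_t U + \sigma U - A_x \partial_x U - A_y \partial_y U = 0,
\]
which is exactly the original symmetric hyperbolic system with an added zeroth-order damping term. Note that this reduction fails without the equal-coefficient hypothesis, since otherwise the $\sigma_x U^x + \sigma_y U^y$ term does not collapse to a multiple of $U$.

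Next I would perform the standard energy estimate by taking the $L^2$ inner product with $U$:
\[
(\partial_t U, U)_{L^2} + \sigma \norma{U}_{L^2}^2 - (A_x \partial_x U, U)_{L^2} - (A_y \partial_y U, U)_{L^2} = 0.
\]
Because $A_x$ is a constant symmetric matrix, one has pointwise $\partial_x(U\cdot A_x U) = 2\, (A_x \partial_x U)\cdot U$, so integration by parts over $\RR^2$ (with decay at infinity for the initial data) gives $(A_x \partial_x U, U)_{L^2} = 0$, and the same holds for the $y$-term. Using $(\partial_t U, U)_{L^2} = \tfrac12\, \tfrac{d}{dt}\norma{U}_{L^2}^2$ yields the identity (\ref{eq:nrj-1}) at once.

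Finally, Gronwall's lemma (or direct integration of the ODE $\tfrac{d}{dt}\norma{U}_{L^2}^2 = -2\sigma\norma{U}_{L^2}^2$) gives $\norma{U(t)}_{L^2}^2 = e^{-2\sigma t}\norma{U^0}_{L^2}^2$, and since $\sigma \ge 0$ we obtain (\ref{eq:est-1}) with $C=1$, where $U^0 = (U^x)^0 + (U^y)^0$. The only delicate point is really the decoupling observation in the first step; after that, the argument is essentially the energy estimate for the unperturbed symmetric hyperbolic system, with the damping term $\sigma \norma{U}_{L^2}^2$ appearing as a nonnegative sink rather than as an obstacle.
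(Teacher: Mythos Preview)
Your proof is correct and follows the same route as the paper: add the two split equations to obtain $\partial_t U + \sigma U - A_x\partial_x U - A_y\partial_y U = 0$, then apply the standard $L^2$ energy argument using the symmetry of $A_x,A_y$. You simply spell out the integration-by-parts and Gronwall steps that the paper leaves implicit.
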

\begin{proof}
Since the absorbing functions are equal, adding equations 
(\ref{eq:pml_corner_1}) and (\ref{eq:pml_corner_2}), and using
(\ref{eq:pml_corner_3}), we see that $U$ satisfies the following
equation:
\begin{equation}
 \label{eq:pml-coins}
\partial_{t}U+ \sigma U-A_{x}\partial_{x}U-A_{y}\partial_{y}U=0.
\end{equation}
In this case, the PML corner appears as a ``classical'' dissipation
term. The energy identity (\ref{eq:nrj-1}) and the estimate (\ref{eq:est-1})
follow then easily.
\end{proof}

The strong stability shows in particular that the solution can not grow 
linearly in time, therefore there is no possible long-time instability 
coming from the corner, when the damping factor is the same in the two 
directions (contrarely to some high order ABCs \cite{Giv-Hag-Pat:06}).

The proof of an analogous energy estimate, for non-constant and different 
absorbing functions $\sigma_{x}$ and $\sigma_{y}$, remains open
(see, for instance~\cite{BJ:02}).

\begin{rem}
For several models as, for instance, Maxwell's equations or the acoustic 
scalar wave equation, 
several authors~\cite{Chew:1994,Collino:1998:1,Collino:1998:2,
Rappaport:1995,Turkel1998533} derived the B\'erenger's PML model in 
terms of a complex coordinate stretching in the frequency domain. 
In the time domain, 
this point of view leads to the construction 
of the so-called unsplit PMLs, which do not need the splitting of the unknowns 
fields,  
 but the introduction of new 
additional unknowns. The main advantage of this formulation is that it 
preserves the spatial differential operators and consequently the original
spatial discretizations developed for the physical models.
The original B\'erenger's PML can then be reformulated 
in several forms (e.g.~\cite{ZC,BPG:04,Gedney96,Abarbanel:2002}), 
depending on the choice of the auxiliary unknowns. Note that all these 
formulations are ``equivalent'',  in the sense that one can go from one set of unknowns to the other through elementary linear operations 
(see e.g.~\cite{HDR-Bec-2003,BJ:02} for Maxwell's equations).

But contrarily to the split PMLs, there is no way of designing unsplit 
PMLs (which preserve the original operator) for a general first-order 
hyperbolic system. However,  it is straightforward to see that in the 
particular case of a PML corner, with $\sigma_x= \sigma_y=\sigma$, 
the complex coordinate stretching leads to equation (\ref{eq:pml-coins}), 
i.e. to the same equation as the one obtained with the splitting. 
In that case actually there is neither splitting anymore nor additional 
unknowns.
\end{rem}  

\section{The model problem: unsplit PMLs for the scalar wave equation}
\label{sec:model}
In the rest of the paper, we focus on a model problem, 
the two-dimensional acoustic wave equation, written as a first-order 
pressure-velocity system, for which  we  illustrate the construction 
of the Cartesian unsplit PMLs at the corner and derive some energy 
estimates. The governing equations of the original first-order 
hyperbolic system in terms of pressure and velocity fields are given by
\begin{align}
&\frac{1}{\mu}\partial_{t}P=\div \bV,\label{eq:cont-sys_1}\\
&\rho\partial_{t}\bV=\grad P,\label{eq:cont-sys_2}
\end{align}
where $P$ is the pressure, $\bV=(V_{x},V_{y})$ is the velocity,
 $\mu$ and $\rho$ are the bulk modulus and the mass density respectively,
which are assumed to be  positive bounded functions. 
We denote by $c=\sqrt{\mu/\rho}$ the acoustic sound velocity. 
 Additionally, we should include
the initial data for $P$ and $V$ and boundary conditions for the pressure
field in system (\ref{eq:cont-sys_1})-(\ref{eq:cont-sys_2}), but in order 
to simplify the presentation, in the rest of the paper they will be 
systematically omitted. 
 
We introduce the Cartesian unsplit PMLs at the corner following the 
Zhao-Cangellaris' formulation \cite{ZC} (recall that the solution of 
the split Berenger's PML can be deduced by simple linear combinations 
from the solution of the unsplit 
formulation and conversely, 
see~\cite{BJ:02}):~\\
{\it Find $(P,\Ps,\bV,\bVs)$ such that}
\begin{align}
&\frac{1}{\mu}\partial_{t}\Ps=\div \bVs,\label{eq:pml-sys_1}\\
&\rho\left(\partial_{t}+\sigma_{x}I\right)V_{x}=\partial_{x}P,\\
&\rho\left(\partial_{t}+\sigma_{y}I\right)V_{y}=\partial_{y}P,\\
&\rho\partial_{t}\Vs_{x}=\rho\left(\partial_{t}+\sigma_{y}I\right)V_{x},\\
&\rho\partial_{t}\Vs_{y}=\rho\left(\partial_{t}+\sigma_{x}I\right)V_{y},\\
&\frac{1}{\mu}\partial_{tt}^{2}\Ps=
\frac{1}{\mu}\left(\partial_{t}+\sigma_{x}I\right)\left(\partial_{t}+\sigma_{y}I\right)P,\label{eq:pml-sys_6}
\end{align}
satisfying the adequate initial conditions, where 
$\bVs=(\Vs_{x},\Vs_{y})$ and $I$ is the identity operator. In the
following, we introduce the standard notation, for any arbitrary positive bounded
function $\nu$:
\[
(\phi,\psi)_{\nu}= (\nu \phi,\psi)_{L^2}, \quad 
\left\| p \right\|^2_{\nu} = (\nu p,p)_{L^2}.
\]

Using the same arguments showed for the Maxwell's equations in \cite{BJ:02}, 
straightforward computations lead to the following result:
\begin{theo}
If $\sigma_{x}$ and $\sigma_{y}$ are constant (but eventually different), the energy
\begin{equation*}
\mathcal{E}_2(t)=\frac12 \left\{ \left\|  \partial^{2}_{t}\Ps\right\|^2_{1/\mu} 
+ \left\| \partial^{2}_{t}\bVs \right\|^2_{\rho}
+ \left\| \sigma_{x} \partial_{t}  \Vs_{x}  \right\|^2_{\rho} 
+ \left\| \sigma_{y} \partial_{t}  \Vs_{y}  \right\|^2_{\rho} 
\right\},
\end{equation*}
satisfies
$$
\frac{d}{dt}\mathcal{E}_2(t)=-2\left(\sigma_{x} \left\|
    \partial_{t}^{2}\Vs_{x}\right\|^2_{\rho}
+ \sigma_{y} \left\|\partial_{t}^{2}\Vs_{y}\right\|^2_{\rho}\right)\le 0.
$$
\end{theo}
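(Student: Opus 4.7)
The plan is to eliminate the auxiliary variables $P$ and $\bV$ to obtain a closed system for $(\Ps, \bVs)$, then run a standard multiply-by-test-function and integrate-by-parts energy argument. The constancy of $\sigma_x$ and $\sigma_y$ is essential: it makes $\partial_t + \sigma_x I$, $\partial_t + \sigma_y I$, $\partial_x$ and $\partial_y$ all mutually commute (and commute with the time-independent coefficient $\rho$), which lets me freely rearrange operators.

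\textbf{Step 1 (reduction).} I apply $\partial_x$ to \eqref{eq:pml-sys_6}, substitute $\partial_x P = \rho(\partial_t + \sigma_x I) V_x$ (the second equation of the PML system), and then use $(\partial_t + \sigma_y I) V_x = \partial_t \Vs_x$ (the fourth equation divided by $\rho$). Commuting the constant-coefficient first-order operators through $\rho$, I obtain
\begin{equation*}
\rho\,(\partial_t + \sigma_x I)^2\, \partial_t \Vs_x \;=\; \partial_x \partial_t^2 \Ps,
\end{equation*}
and symmetrically with $x \leftrightarrow y$. Combined with \eqref{eq:pml-sys_1}, these constitute the closed system I work with.

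\textbf{Step 2 (energy contributions).} Differentiating \eqref{eq:pml-sys_1} twice in time and testing against $\partial_t^2 \Ps$ gives, after integration by parts in space,
\begin{equation*}
\tfrac{1}{2}\tfrac{d}{dt}\norma{\partial_t^2 \Ps}^2_{1/\mu} \;=\; -\int \partial_t^2 \bVs \cdot \grad\, \partial_t^2 \Ps.
\end{equation*}
In the reduced velocity equation I expand $(\partial_t + \sigma_x I)^2 \partial_t = \partial_t^3 + 2\sigma_x \partial_t^2 + \sigma_x^2 \partial_t$; testing against $\partial_t^2 \Vs_x$ produces exactly the three quadratic contributions
\begin{equation*}
\tfrac{1}{2}\tfrac{d}{dt}\norma{\partial_t^2 \Vs_x}^2_\rho \;+\; 2\sigma_x \norma{\partial_t^2 \Vs_x}^2_\rho \;+\; \tfrac{1}{2}\tfrac{d}{dt}\norma{\sigma_x \partial_t \Vs_x}^2_\rho \;=\; \int \partial_x \partial_t^2 \Ps \cdot \partial_t^2 \Vs_x,
\end{equation*}
and analogously for the $y$-component.

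\textbf{Step 3 (cancellation).} Summing the three identities, the right-hand sides of the two velocity equations add to $\int \grad\, \partial_t^2 \Ps \cdot \partial_t^2 \bVs$, which cancels the right-hand side of the pressure identity. What remains on the left is precisely $\frac{d}{dt}\mathcal{E}_2 + 2\sigma_x \norma{\partial_t^2 \Vs_x}^2_\rho + 2\sigma_y \norma{\partial_t^2 \Vs_y}^2_\rho$, yielding the claim.

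\textbf{Main obstacle.} Once Step 1 is in hand, the rest is mechanical. The one delicate point is the elimination itself: it fails as soon as $\sigma_x$ or $\sigma_y$ depends on space, since then $\partial_t + \sigma_x I$ no longer commutes with $\partial_x$ and the cascade of substitutions breaks, mirroring the open problem mentioned at the end of Section~\ref{sec:2} for the split formulation. A minor additional point is that the spatial integrations by parts tacitly assume boundary conditions on $\Ps$, inherited from those on $P$, which annihilate the boundary contributions.
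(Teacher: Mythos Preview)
Your argument is correct and is precisely the computation the paper alludes to when it says the result follows from ``the same arguments showed for the Maxwell's equations in \cite{BJ:02}'': eliminate $P$ and $\bV$ to obtain a closed system in $(\Ps,\bVs)$ using the commutation of the constant-$\sigma$ operators, then test the differentiated equations against $\partial_t^2 \Ps$ and $\partial_t^2 \bVs$ and sum. Your remark that the elimination collapses once $\sigma_x$ or $\sigma_y$ varies in space is exactly the obstruction the paper flags as the open problem.
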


Finally, the result showed in \cite{BPG:04} can also be obviously extended
here:
\begin{theo}
If $\sigma_{x}= \sigma_{y}= \sigma $, where $\sigma$ is a positive
constant, we have the following identity:
\[
\frac12 \frac{d}{dt} \left\{ \left\| P  \right\|^2_{1/\mu}  +  \left\|
    \bVs\right\|^2_{\rho} + \sigma^2  \left\| F  \right\|^2_{1/\mu}
    \right\} = -2 \sigma \left\| P  \right\|^2_{1/\mu}, 
\]
where 
\[
F(t)= \dps \int_0^t P(s) ds.
\]
This implies in particular the decrease of 
the energy of order 0: 
\begin{equation*}
\mathcal{E}_0(t)=\frac12\left\{\left\| P  \right\|^2_{1/\mu}  +  \left\|
    \bVs\right\|^2_{\rho} \right\}.
\end{equation*}
\end{theo}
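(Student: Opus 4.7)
The plan is to mimic the energy argument used in \cite{BPG:04} for the Maxwell system, adapted to the acoustic PML (\ref{eq:pml-sys_1})--(\ref{eq:pml-sys_6}). The first observation is that when $\sigma_x = \sigma_y = \sigma$, the auxiliary velocities decouple from the splitting: substituting the equations for $V_x$ and $V_y$ into those for $\Vs_x$ and $\Vs_y$ yields the single identity
\[
\rho\,\partial_t\bVs = \grad P,
\]
which eliminates the split velocity components from the $(P,\bVs)$ energy balance. Testing this with $\bVs$, integrating over the whole plane, performing an integration by parts (the paper's convention is that boundary contributions are silently dropped), and using (\ref{eq:pml-sys_1}) to rewrite $\div\bVs = \tfrac{1}{\mu}\partial_t\Ps$ gives
\[
\tfrac{1}{2}\frac{d}{dt}\norma{\bVs}_\rho^2 = -(P,\partial_t\Ps)_{1/\mu}.
\]

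To close the identity I need to express $(P,\partial_t\Ps)_{1/\mu}$ in terms of the quantities appearing in the stated energy, namely $\norma{P}_{1/\mu}^2$ and $\norma{F}_{1/\mu}^2$. For $\sigma_x=\sigma_y=\sigma$, equation (\ref{eq:pml-sys_6}) reads $\partial_{tt}^2\Ps = (\partial_t+\sigma)^2 P$; one integration in time, under the compatibility $\partial_t\Ps(0) = \partial_t P(0)+2\sigma P(0)$ (which is implicit in the ``adequate initial conditions'' of the statement, and automatic when all fields start from rest), produces the pointwise identity
\[
\partial_t\Ps = \partial_t P + 2\sigma P + \sigma^2 F.
\]
Substituting this into the previous energy equation, using $\partial_t F = P$ to rewrite $\sigma^2(P,F)_{1/\mu} = \tfrac{\sigma^2}{2}\tfrac{d}{dt}\norma{F}_{1/\mu}^2$, and collecting every total time derivative on the left-hand side produces exactly the claimed identity.

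The decrease of $\mathcal{E}_0$ then follows by integrating in time: since $F(0)=0$ one obtains $\mathcal{E}_0(t) + \tfrac{\sigma^2}{2}\norma{F(t)}_{1/\mu}^2 \le \mathcal{E}_0(0)$, hence $\mathcal{E}_0(t) \le \mathcal{E}_0(0)$ for all $t\ge 0$. The only non-mechanical step of the whole argument is the initial compatibility needed to turn the once-integrated form of (\ref{eq:pml-sys_6}) into a pointwise (rather than ``up to an additive constant in $t$'') relation for $\partial_t\Ps$; every other manipulation is a routine energy calculation in exactly the same spirit as the proof of the preceding theorem.
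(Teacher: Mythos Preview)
Your argument is correct and is exactly the adaptation of the energy estimate from \cite{BPG:04} that the paper alludes to; note that the paper itself does not spell out a proof of this theorem but simply states that the result of \cite{BPG:04} ``can also be obviously extended here''. Your derivation of $\rho\,\partial_t\bVs=\grad P$ from the equations for $V_\alpha$ and $\Vs_\alpha$, followed by the once-integrated form of (\ref{eq:pml-sys_6}) under the natural initial compatibility, reproduces precisely the stated identity, and the bound $\mathcal{E}_0(t)\le\mathcal{E}_0(0)$ follows as you indicate.
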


\section{Semi-discretisation in space using a mixed spectral element method}
\label{sec:approx-space}
The system (\ref{eq:pml-sys_1})-(\ref{eq:pml-sys_6}) is approximated in space 
with   
$Q_r-Q_r^{\disc}$ 
mixed spectral elements based on hexaedral meshes, described in~%
\cite{cohen-fauq:00,fauqueux}. For the sake of simplicity in its 
description, we first introduce the spatial discretization for the 
original acoustic model before applying it to the PML problem.

\subsection{Spatial approximation in the fluid domain}
In order to describe briefly the mixed finite elements, we first
consider the approximation of the equations set in the fluid domain $\Omega$
with Dirichlet boundary conditions for the pressure field. The variational
formulation of (\ref{eq:cont-sys_1})-(\ref{eq:cont-sys_2}) is then: \\
{\it Find $P \in H^1_0(\Omega), \bV \in (L^2(\Omega))^2$ such that}
\begin{align*}
&\dps \frac{d}{dt} (P,\Pt)_{1/\mu}= - ( \bV, \grad \Pt)_{L^2}, 
\quad   \forall \Pt \in H^1_0(\Omega), \\
& \dps \frac{d}{dt}(\bV, \Vt )_{\rho} =(\grad P,  \Vt)_{L^2}, 
\quad \forall \Vt \in (L^2(\Omega))^2.
\end{align*}

We introduce ${\cal T}= \dps \cup_{i=1}^{N_E} K_i$ a partition of
$\Omega$ with $N_E$ 
quadrilateral elements, $\widehat{K}=[0,1]^2$ the unit element, and the
conform mappings $F_i$ such that $F_i(\widehat{K})=K_i$, $\forall
i=1,\ldots,N_E$. 
We set $DF_i$  the Jacobian matrix of $F_i$
and its Jacobian $J_i= \mbox{ det } DF_i$. We finally define the 
approximation spaces:
\begin{align*}
& {\cal P}_{h 0}^r=\left\{ \Pt\in \cC^0(\Omega), \quad 
\Pt_{|K_i} \circ F_i \in Q_r(\widehat{K}), \quad 
\Pt=0  \mbox{  on } \partial \Omega \right\},\\
& {\cal V}_h^r=\left\{ \Vt\in (\L^2(\Omega))^2, \quad 
|J_i| DF_i^{-1} \Vt_{|K_i} \circ F_i \in  (Q_r(\widehat{K}))^2 \right\},  
\end{align*}
with $Q_r(\widehat{K})$ the set of polynomials whose degree is less or 
equal to $r$ in each spatial variable. 
For both spaces, the 
interpolation points coincide with the Gauss-Lobatto quadrature points. 
The semi-approximate problem is then:\\
{\it Find $P_h \in {\cal P}_{0h}^r, \bV \in {\cal V}_h^r$ such that}
\begin{align*}
&\dps \frac{d}{dt} (P_h,\Pt_h)_{1/\mu}= - ( \bV_h, \grad \Pt_h)_{L^2}, 
\quad \forall \Pt_h \in {\cal P}_{0h}^r,\\
& \dps \frac{d}{dt}(\bV_h, \Vt_h )_{\rho} =(\grad P_h, \Vt_h)_{L^2}, 
\quad \forall \Vt_h \in {\cal V}_h^r.
\end{align*}
Let $\{\varphi_j:\,1\le j\le N_{\cal P}\}$ 
the finite element basis of ${\cal P}_{0h}^r$
   and $\{\Vt_m:\,1\le
m\le N_{\cal V}\}$ the basis functions of ${\cal V}_h^r$ 
(see~\cite{fauqueux,cohen-fauq:00} for a more 
detailed definition). These basis are associated to the interpolation
points that coincide with the quadrature points of the
Gauss-Lobatto quadrature formula which is exact for polynomials of
degree $2r-1$. We introduce the  discrete mass and stiffness
matrices 
\begin{eqnarray*}
&M_{ij}=\left( \varphi_{i},\varphi_{j}\right)_{1/\mu},
&\qquad 1\le i,j\le N_{\cal P},\\
&(R_{x})_{jm}=\left( \partial_{x}\varphi_{j},\Vt_{m}\right)_{L^2},
&\qquad 1\le j\le N_{\cal P},\ 1\le m\le N_{\cal V},\\
&(R_{y})_{jm}=\left( \partial_{y}\varphi_{j},\Vt_{m}\right)_{L^2},
&\qquad 1\le j\le N_{\cal P},\ 1\le m\le N_{\cal V},\\
&B_{ml}=\left( \Vt_{m},\Vt_{l}\right)_{\rho},
&\qquad 1\le m,l\le N_{\cal V}.
\end{eqnarray*}
where all the integrals are computed by using the Gauss-Lobatto
quadrature formula. The main advantages of this method are: 
 (a) it provides diagonal or block diagonal mass matrices (mass
lumping)~; (b) the stiffness matrices are independent of the mesh and
of the physical properties of the fluid medium (gain of storage).

The semi-discretized scheme can then be written in the matrix form:
\begin{align}
&\dps \frac{d}{dt} M P_h  +
R_{x}V_{x,h}+R_{y}V_{y,h} =0 , \label{sd-mat-1}\\
&
\dps \frac{d}{dt}BV_{x,h}=R_{x}^{*}P_{h},\\
&
\dps \frac{d}{dt}BV_{y,h}=R_{y}^{*}P_{h},\label{sd-mat-3}
\end{align}
where we identify the notations for the unknowns and their basis 
coordinates. 

\subsection{Spatial approximation in the PML corner}
Coming back to the semi-discretization of the PML corner problem, we
introduce some new matrices, defined for any $L^1$ positive function $\nu$
\begin{eqnarray*}
&M^{\nu}_{ij}=\left( \nu\varphi_{i},\varphi_{j}\right)_{1/\mu},&\qquad 1\le i,j\le N_{\cal P},\\
&B^{\nu}_{ml}=\left( \nu \Vt_{m},\Vt_{l}\right)_{\rho},&\qquad 1\le m,l\le N_{\cal V}.
\end{eqnarray*}
The semi-discretized scheme can then be written in the matricial form
as:
\begin{align}
&\dps \frac{d}{dt} M\Ps_{h} + R_{x}\Vs_{x,h}+R_{y}\Vs_{y,h}=0 ,\label{eq:pml-sd_1}\\
 &\dps \left(\frac{d}{dt}   B + B^{\sigma_x}\right)  V_{x,h}  =R_{x}^{*}P_{h},\\
 &\dps \left(\frac{d}{dt}  B  +  B^{\sigma_y}\right) V_{y,h} =R_{y}^{*}P_{h},\\
&\dps \frac{d}{dt} B\Vs_{x,h}= \left(\frac{d}{dt} B  + B^{\sigma_y}\right)V_{x,h},\\
&\dps \frac{d}{dt}B\Vs_{y,h}=\left(\frac{d}{dt}B  + B^{\sigma_x}\right)V_{y,h},\\
&\dps \frac{d^2}{dt^2} M\Ps_{h}= 
 \dps \left(\frac{d^2}{dt^2}  M + \frac{d}{dt} M^{\sigma_x+\sigma_y}+ 
  M^{\sigma_x \sigma_y}\right)    P_{h},\label{eq:pml-sd_6}
\end{align}

\section{Two alternatives for the time discretization}
\label{sec:approx-time}
We introduce $\Delta t$ the time step and $(P_{h})^{n}$,
$(\Ps_{h})^{n}$, $(\bV_{h})^{n+\frac12}$, $(\bVs_{h})^{n+\frac12})$ 
denote the spatial vector fields associated to the degrees of freedom of 
each unknown in the fully discrete problem at time $n\Delta t$ and 
$(n+1/2)\Delta t$, respectively. Since we work with the unsplit PML 
formulation, one notices that there is the second-order equation in time 
(\ref{eq:pml-sd_6}) to be discretized in the PML corner. This leads to 
two different second-order centered finite difference 
approximations in time. 

{\bf Notation.} In the sequel, $\norma{\cdot}$ and $(\cdot\left.\right|\cdot)$  
denote respectively the Euclidian norm and its associated inner product. 
For any positive matrix $S$, we introduce the notation:
$$
(U \left.\right|V)_S = (S U \left.\right| V) 
\quad ; \quad 
\norma{V}_S^2= (S V \left. \right|  V) 
$$

\subsection{Time Discretization in the fluid domain}
A centered second order finite difference scheme is used for the 
time discretization of matrix system (\ref{sd-mat-1})-(\ref{sd-mat-3}) :
\begin{align*}
  & M \frac{(P_{h})^{n+1}-(P_{h})^{n}}{\Delta t} +
 R_{x}(V_{x,h})^{n+1/2}+R_{y}(V_{y,h})^{n+1/2}=0 ,\\[12pt]
  &   B \frac{(V_{x,h})^{n+1/2}- (V_{x,h})^{n-1/2}}{\Delta t} 
    =R_{x}^{*}P_{h}^{n}, \\[12pt]
  &   B  \frac{(V_{y,h})^{n+1/2}- (V_{y,h})^{n-1/2}}{\Delta t} 
  =R_{y}^{*}P_{h}^n,
\end{align*}
completed with the adequate initial conditions. We recall that this 
scheme is stable under the CFL stability condition (e.g.~\cite{fauqueux,cohen-fauq:00}):
\begin{equation*}
\dps \sup_{u\ne 0} \frac{(M^{-1} Ku   \left. \right|  u)}{    \|    u\|^2} <
\frac{4}{\Delta t^2} \Longleftrightarrow M- \dps \frac{\Delta t^2}{4} K 
\mbox{ is positive definite},
\end{equation*}
where  $K$ is the stiffness matrix defined as $K= R{\cal B}^{-1} R^{*}$,
$R$ is the $N_{\cal P} \times 2 N_{\cal V}$  matrix defined as $R= (R_x \; R_y)$ 
and ${\cal B}$ is the $2N_{\cal V} \times 2 N_{\cal V}$ block diagonal 
matrix, each block of size $ N_{\cal V} \times N_{\cal V}$ being equal 
to $B$. Classically, on a regular grid, this condition is expressed as 
\begin{equation}
\label{eq:cfl-fluid-clas}
C\dps \frac{\Delta t}{h} <1,
\end{equation}
where $C$ is a constant independent of $h$ and $\Delta t$.

In a homogeneous fluid and using a regular mesh, this condition reduces to
\begin{equation*}
\dps \sqrt{\mu/\rho} \frac{\Delta t}{h} < cfl_{d,r} , 
\end{equation*}
where $d$ is the dimension of the space and $r$ is the degree of the
polynomials. The constant $cfl_{d,r}$ can be related to the CFL number 
in dimension one (see \cite{fauqueux}) by 
\begin{equation}
\label{eq:cfl-fluid-3}
\dps  cfl_{d,r}= \dps \frac{cfl_{1,r}}{\sqrt{d}} .
\end{equation}
In particular for $r=1$, we have $cfl_{1,1}=1$ and in dimension $d$ 
we recover the classical CFL condition:
\begin{equation*}
\dps c \frac{\Delta t}{h} \le \frac{1}{\sqrt{d}},
\end{equation*}
which is natural since the $Q_1 - Q_1^{\disc}$ discretization on a regular 
grid is equivalent to the standard second-order finite difference scheme.

\subsection{Time Discretization in the PML corner: { scheme A}}
In this section we consider the scheme which was originally used in \cite{fauqueux}
for the time discretization of the governing equations in the PML corners.
Let us first introduce some useful notations: 
\[
(D_{\Delta t} U)^k = (U^{k+1/2}-U^{k-1/2})/\Delta t \; ; \quad 
(I_{\Delta
  t} {U})^k=(U^{k+1/2}+U^{k-1/2})/2.
\]
The time approximation for (\ref{eq:pml-sd_1})-(\ref{eq:pml-sd_6}) 
provided by scheme A is written as:
\begin{align*}
&M(D_{\Delta t}\Ps_{h})^{n+\frac12}+R_{x}(\Vs_{x,h})^{n+\frac12}+R_{y}(\Vs_{y,h})^{n+\frac12}=0,\\
&B(D_{\Delta t} V_{x,h})^{n} +  B^{\sigma_x} (I_{\Delta t} V_{x,h})^{n}=R_{x}^{*}P_{h}^{n},\\
&B(D_{\Delta t} V_{y,h})^{n}+ B^{\sigma_y} (I_{\Delta t}V_{y,h})^{n} =R_{y}^{*}P_{h}^{n},\\
&B(D_{\Delta t}\Vs_{x,h})^{n}=B(D_{\Delta t} V_{x,h})^n + B^{\sigma_y}
(I_{\Delta t}V_{x,h})^n ,\\
&B(D_{\Delta t}\Vs_{y,h})^{n}=B(D_{\Delta t} V_{y,h})^n + B^{\sigma_x}
(I_{\Delta t} V_{y,h})^n,\\
&M(D_{\Delta t}^{2}\Ps_{h})^{n}=M(D_{\Delta t}^2
P_{h})^{n}+ M^{\sigma_x+\sigma_y}   (D_{\Delta t} I_{\Delta t}
{P_{h}})^{n}+    M^{\sigma_x \sigma_y}  P_{h}^{n}.
\end{align*}

The initial motivation of this study comes from the instabilities
observed in the numerical simulations for some values of the PML 
parameters (see Section~\ref{sec:num}). Their origin is located in the 
PML corner domains (both for variable and constant $\sigma_\alpha$, $\alpha=x,y$), 
and we also observed that these instabilities were removed when decreasing
either $\Delta t$ or the values of $\sigma_\alpha$. 

Since the question of
stability for variable $\sigma_\alpha$ is an open question even for the
continuous PML models, we will assume for the analysis of the  schemes 
that $\sigma_\alpha$ are positive constants for $\alpha=x,y$ in the corner domains. 
Since $\sigma_x$ is constant, we have $B^{\sigma_x}= \sigma_x B$ and 
we can define the discrete centered operator 
$(D_{\Delta t}^{\sigma_x} U)^k=(D_{\Delta t} U)^k + \sigma_x (I_{\Delta t} {U})^k$
to approximate $\partial_t + \sigma_x I$. With these notations and
assumptions, the scheme can be rewritten as:
\begin{align}
&M(D_{\Delta t}\Ps_{h})^{n+\frac12}+R_{x}(\Vs_{x,h})^{n+\frac12}+R_{y}(\Vs_{y,h})^{n+\frac12}=0,\\
&B(D_{\Delta t}^{\sigma_x} V_{x,h})^{n}=R_{x}^{*}P_{h}^{n},\\
&B(D_{\Delta t}^{\sigma_y} V_{y,h})^{n}=R_{y}^{*}P_{h}^{n},\\
&B(D_{\Delta t}\Vs_{x,h})^{n}=B(D_{\Delta t}^{\sigma_y} V_{x,h})^n,\\
&B(D_{\Delta t}\Vs_{y,h})^{n}=B(D_{\Delta t}^{\sigma_x} V_{y,h})^n,\\
&M(D_{\Delta t}^{2}\Ps_{h})^{n}=M\left((D_{\Delta t}^2
P_{h})^{n}+(\sigma_x+\sigma_y) (D_{\Delta t} I_{\Delta t}
{P_{h}})^{n}+\sigma_x \sigma_y P_{h}^{n}\right).\label{eq:pml-sys-discr-A_6}
\end{align}
We can  show the stability result:
\begin{theo}\label{th:A}
Assume that  $\sigma_{x}$ and $\sigma_{y}$ are constant and equal to $\sigma$. 
The discrete scheme A is stable if the matrix
\begin{equation}\label{eq:cond_A}
M-\dps \frac{\Delta t^2}{4} K_\sigma 
\end{equation}
is positive definite, where $K_\sigma=K+\sigma^2 M$ 
and $K= R{\cal B}^{-1} R^{*}$.
\end{theo}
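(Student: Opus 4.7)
The plan is to exploit the assumption $\sigma_x=\sigma_y=\sigma$ to eliminate the auxiliary fields $\bV_h$, $\bVs_h$ and $\Ps_h$ from scheme A and reduce the dynamics to a single leapfrog recursion on $P_h$ alone. Under this assumption $B^{\sigma_x}=B^{\sigma_y}=\sigma B$ and $D_{\Delta t}^{\sigma_x}=D_{\Delta t}^{\sigma_y}$, so the $\Vs$ equations of the scheme simplify to $B(D_{\Delta t}\Vs_{\alpha,h})^n = B(D_{\Delta t}^{\sigma}V_{\alpha,h})^n = R_\alpha^* P_h^n$ for $\alpha=x,y$. Applying $D_{\Delta t}$ to the first scheme equation and substituting gives
\[
M(D_{\Delta t}^2\Ps_h)^n = -\bigl(R_x B^{-1}R_x^* + R_y B^{-1}R_y^*\bigr)P_h^n = -K P_h^n.
\]
On the other hand, (\ref{eq:pml-sys-discr-A_6}) expresses $M(D_{\Delta t}^2\Ps_h)^n$ directly (with $\sigma_x+\sigma_y=2\sigma$ and $\sigma_x\sigma_y=\sigma^2$). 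Equating the two yields the closed second-order recursion
\[
M(D_{\Delta t}^2 P_h)^n + 2\sigma M(D_{\Delta t} I_{\Delta t} P_h)^n + K_\sigma P_h^n = 0,
\]
which is exactly the leapfrog discretization of the damped wave equation $M\ddot P + 2\sigma M\dot P + K_\sigma P = 0$.

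Next I would perform a classical discrete energy analysis: test the recursion against the centered velocity $(D_{\Delta t} I_{\Delta t} P_h)^n = (P_h^{n+1}-P_h^{n-1})/(2\Delta t)$. Using the telescoping identity $(D_{\Delta t}^2 u\,|\,D_{\Delta t} I_{\Delta t} u)^n = \tfrac{1}{2\Delta t}\bigl(\norma{(D_{\Delta t}u)^{n+1/2}}_M^2 - \norma{(D_{\Delta t}u)^{n-1/2}}_M^2\bigr)$ together with symmetry of $K_\sigma$, one obtains
\[
\mathcal{E}_h^{n+1/2} - \mathcal{E}_h^{n-1/2} = -\frac{\sigma}{2\Delta t}\,\norma{P_h^{n+1}-P_h^{n-1}}_M^2 \le 0,
\]
for the discrete energy
\[
\mathcal{E}_h^{n+1/2} = \tfrac12\norma{(D_{\Delta t}P_h)^{n+1/2}}_M^2 + \tfrac12\bigl(K_\sigma P_h^{n+1}\,|\,P_h^n\bigr).
\]
The damping term supplies strict dissipation, so $\mathcal{E}_h^{n+1/2}$ is nonincreasing in $n$.

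To finish, I would turn monotonicity into a uniform bound by making $\mathcal{E}_h^{n+1/2}$ coercive. The polarization identity $(K_\sigma P_h^{n+1}\,|\,P_h^n) = \tfrac14\norma{P_h^{n+1}+P_h^n}_{K_\sigma}^2 - \tfrac{\Delta t^2}{4}\norma{(D_{\Delta t}P_h)^{n+1/2}}_{K_\sigma}^2$ rewrites
\[
\mathcal{E}_h^{n+1/2} = \tfrac12\,\bigl((M-\tfrac{\Delta t^2}{4}K_\sigma)(D_{\Delta t}P_h)^{n+1/2}\,\big|\,(D_{\Delta t}P_h)^{n+1/2}\bigr) + \tfrac18\norma{P_h^{n+1}+P_h^n}_{K_\sigma}^2,
\]
which is a positive definite quadratic form in $\bigl((D_{\Delta t}P_h)^{n+1/2},\,P_h^{n+1}+P_h^n\bigr)$ precisely under condition (\ref{eq:cond_A}). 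Combined with $\mathcal{E}_h^{n+1/2} \le \mathcal{E}_h^{1/2}$, this gives a uniform-in-$n$ control of $P_h^n$ and its discrete time derivative; $\bV_h$, $\bVs_h$ and $\Ps_h$ are then recovered and bounded by back-substitution in the first-order scheme equations. The main obstacle is the algebraic reduction in the first step: one must check that, when $\sigma_x=\sigma_y$, the two a priori distinct expressions for $M(D_{\Delta t}^2\Ps_h)^n$ --- one from time-differencing the $\Ps_h$ equation and plugging in the velocity updates, the other postulated by (\ref{eq:pml-sys-discr-A_6}) --- collapse to exactly $-K_\sigma P_h^n$ without any residual term. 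This algebraic cancellation is what ultimately makes the CFL depend on $K_\sigma = K + \sigma^2 M$ rather than on $K$ alone, and explains the deterioration relative to the standard fluid CFL.
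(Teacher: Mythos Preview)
Your argument is correct and follows the paper's own proof essentially step for step: reduce scheme~A (under $\sigma_x=\sigma_y=\sigma$) to the single second-order recursion $M(D_{\Delta t}^2 P_h)^n + 2\sigma M(D_{\Delta t}I_{\Delta t}P_h)^n + K_\sigma P_h^n = 0$, define the discrete energy $\mathcal{E}_h^{n+1/2}=\tfrac12\norma{(D_{\Delta t}P_h)^{n+1/2}}_M^2+\tfrac12(K_\sigma P_h^{n+1}\,|\,P_h^n)$, and check its decrease together with its positivity under~(\ref{eq:cond_A}). You actually spell out the coercivity step via the polarization identity more explicitly than the paper does, and your closing remark about a possible ``residual term'' is not a real obstacle: the two expressions for $M(D_{\Delta t}^2\Ps_h)^n$ equate cleanly precisely because $\sigma_x=\sigma_y$ makes $D_{\Delta t}\Vs_{\alpha,h}=D_{\Delta t}^\sigma V_{\alpha,h}$, so no further verification is needed.
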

\begin{proof}
Since the absorbing functions are equal, it is possible to rewrite the
scheme as
\begin{align}
&M(D_{\Delta t}\Ps_{h})^{n+\frac12}=
 R_{x}(\Vs_{x,h})^{n+\frac12}+R_{y}(\Vs_{y,h})^{n+\frac12},\label{eq:pml-sys-discr-A_1-1}\\
&B  (D_{\Delta t}\Vs_{x,h})^{n}=R_{x}^{*}P_{h}^{n},\label{eq:pml-sys-discr-A_1-2}\\
&B(D_{\Delta t}\Vs_{y,h})^{n}=R_{y}^{*}P_{h}^{n},\label{eq:pml-sys-discr-A_1-3}\\
&M(D_{\Delta t}^{2}\Ps_{h})^{n}=M\left((D_{\Delta t}^2
P_{h})^{n}+ 2 \sigma (D_{\Delta t} I_{\Delta t}
{P_{h}})^{n}+ \sigma^2 P_{h}^{n}\right).\label{eq:pml-sys-discr-A_1-4}
\end{align}
Applying $D_{\Delta t} $ to (\ref{eq:pml-sys-discr-A_1-1}), it is then
easy to eliminate $\bVs_h$ by using 
(\ref{eq:pml-sys-discr-A_1-2}), (\ref{eq:pml-sys-discr-A_1-3}), 
and (\ref{eq:pml-sys-discr-A_1-4}), and rewrite a scheme only in terms 
of $P_{h}$:
\[
M\left( (D_{\Delta t}^2
P_{h})^{n}+ 2 \sigma (D_{\Delta t} I_{\Delta t}
{P_{h}})^{n}+ \sigma^2 P_{h}^{n}\right)+ R {\cal B}^{-1} R^{*} P_{h}^{n}=0.
\]
Introducing the stiffness matrix 
$K_\sigma = R {\cal B}^{-1} R^{*} + \sigma^2 M$, 
we see that the following discrete energy 
\[
\mathcal{E}_{\mathrm{A},h}^{n+\frac12}=
\frac12\left\{ \left\| \frac{P_{h}^{n+1}- P_{h}^{n}}{\Delta
      t}\right\|_M^2 + (K_\sigma P_{h}^{n}    \left. \right| P_{h}^{n+1})
\right\}
\]
satisfies the identity
\[
\dps \frac{ \mathcal{E}_{\mathrm{A},h}^{n+\frac12}-
  \mathcal{E}_{\mathrm{A},h}^{n-\frac12}}{\Delta t}= -2 \sigma \left\| 
(D_{\Delta t} I_{\Delta t}{P_{h}})^{n} \right\|_M^2,
\]
and that 
$\mathcal{E}_{\mathrm{A},h}^{n}\ge 0$ if matrix (\ref{eq:cond_A})
is positive definite. 
\end{proof}

Let us remark that the sufficient condition stated in the above Theorem for the stability of the numerical scheme depends on the value
of the absorbing function $\sigma$, which means that  the CFL condition is not the same in the PML
corner than in the fluid domain. More
precisely,  if ${C}$ is the constant appearing in the CFL
condition in the fluid domain (see (\ref{eq:cfl-fluid-clas})), then the
stability condition in the PML corner can be expressed as
\begin{equation}
\label{eq:cfl-pml-A}
\Delta t^2 \left( \frac{{C}^2}{h^2} + \frac{\sigma^2}{4}\right)  <1
\Longleftrightarrow  \frac{{C} \Delta t}{h} < \frac{1}{\left(1+
  \dps \frac{\sigma^2 h^2}{4 {C}^2} \right)^{1/2}},
\end{equation}
which is more restrictive than the one in the fluid domain. 

\paragraph{Application: homogeneous medium and approximation with
second order finite differences in time and in space.} 
We consider here an homogeneous medium and we use the lowest order mixed
spectral elements for the approximation, i.e. $r=1$, on a regular grid. 
The scheme can then be written as a finite difference scheme in space and time,
\begin{align*}
&\frac{1}{\mu}(D_{\Delta t}\Ps_{h})^{n+\frac12}_{ij}=
(D_{\Delta x}\Vs_{x,h})^{n+\frac12}_{ij}+(D_{\Delta y}\Vs_{y,h})^{n+\frac12}_{ij},\\
&\rho(D_{\Delta t}^{\sigma_{x}} V_{x,h})^{n}_{i+\frac12 j}=(D_{\Delta x}P_{h})^{n}_{i+\frac12 j},\\
&\rho(D_{\Delta t}^{\sigma_{y}} V_{y,h})^{n}_{i j+\frac12}=(D_{\Delta y}P_{h})^{n}_{i j+\frac12},\\
&(D_{\Delta t}\Vs_{x,h})^{n}_{i+\frac12 j}=(D_{\Delta t}^{\sigma_{y}} V_{x,h})^n_{i+\frac12 j},\\
&(D_{\Delta t}\Vs_{y,h})^{n}_{i j+\frac12}=(D_{\Delta t}^{\sigma_{x}} V_{y,h})^n_{i j+\frac12},\\
&((D_{\Delta t})^{2}\Ps_{h})^{n}_{ij}=(((D_{\Delta t})^2
P_{h})^{n}+(\sigma_x+\sigma_y) (D_{\Delta t} I_{\Delta t}
{P_{h}})^{n}+\sigma_x \sigma_y P_{h}^{n})_{ij},
\end{align*}
where the subscripts $ij$ denote the degrees of freedom at nodes of 
coordinates $(i\Delta x,j\Delta y)$ as usual in the finite difference 
discretizations in the spatial variables. 

The scheme in the fluid corresponds to the classical second order
finite difference scheme. In the two-dimensional case, 
its CFL stability condition in the fluid domain corresponds to 
take ${C}= \sqrt{2}c$ in (\ref{eq:cfl-fluid-clas}) and so, we obtain
\begin{equation}
\label{eq:cfl-pml-B-cp}
c \Delta t < \dps \frac{h}{\sqrt{2}}.
\end{equation}
In the PML corner, this condition becomes
\begin{equation}
\label{eq:cfl-pml-A-cp}
c \Delta t < \dps \frac{h}{\sqrt{2}} \frac{1}{\left(1+
  \dps \frac{\sigma^2 h^2}{8c^2} \right)^{1/2}}.
\end{equation}

\subsection{Time discretization in the PML corner: { scheme B}}
In this section we propose a new scheme which corresponds to another
time discretization of the last second order differential equation
(\ref{eq:pml-sd_6}):
\begin{align*}
&M(D_{\Delta t}\Ps_{h})^{n+\frac12}+R_{x}(\Vs_{x,h})^{n+\frac12}+R_{y}(\Vs_{y,h})^{n+\frac12}=0,\\
&B(D_{\Delta t} V_{x,h})^{n} +  B^{\sigma_x} (I_{\Delta t} V_{x,h})^{n}=R_{x}^{*}P_{h}^{n},\\
&B(D_{\Delta t} V_{y,h})^{n}+ B^{\sigma_y} (I_{\Delta t}V_{y,h})^{n} =R_{y}^{*}P_{h}^{n},\\
&B(D_{\Delta t}\Vs_{x,h})^{n}=B(D_{\Delta t} V_{x,h})^n + B^{\sigma_y}
(I_{\Delta t}V_{x,h})^n ,\\
&B(D_{\Delta t}\Vs_{y,h})^{n}=B(D_{\Delta t} V_{y,h})^n + B^{\sigma_x}
(I_{\Delta t} V_{y,h})^n ,\\
&M(D_{\Delta t}^{2}\Ps_{h})^{n}=M(D_{\Delta t}^2
P_{h})^{n}+ M^{\sigma_x+\sigma_y}   (D_{\Delta t} I_{\Delta t}
{P_{h}})^{n}+    M^{\sigma_x \sigma_y}  (I_{\Delta t}^2 P_{h})^{n}.
\end{align*}
For constant values of the damping parameters, this scheme becomes:
\begin{align}
&M(D_{\Delta t}\Ps_{h})^{n+\frac12}+R_{x}(\Vs_{x,h})^{n+\frac12}+R_{y}(\Vs_{y,h})^{n+\frac12}=0,
\label{eq:pml-sys-discr-B_1}\\
&B(D_{\Delta t}^{\sigma_{x}} V_{x,h})^{n}=R_{x}^{*}P_{h}^{n},\label{eq:pml-sys-discr-B_2}\\
&B(D_{\Delta t}^{\sigma_{y}} V_{y,h})^{n}=R_{y}^{*}P_{h}^{n},\label{eq:pml-sys-discr-B_3}\\
&B(D_{\Delta t}\Vs_{x,h})^{n}=B(D_{\Delta t}^{\sigma_{y}} V_{x,h})^n,\label{eq:pml-sys-discr-B_4}\\
&B(D_{\Delta t}\Vs_{y,h})^{n}=B(D_{\Delta t}^{\sigma_{x}} V_{y,h})^n,\label{eq:pml-sys-discr-B_5}\\
&M(D_{\Delta t}^{2}\Ps_{h})^{n}=M(D_{\Delta t}^{\sigma_{x}} D_{\Delta t}^{\sigma_{y}} P_{h})^{n},
\label{eq:pml-sys-discr-B_6}
\end{align}
In this scheme, the operator $(\partial_{t}+\sigma_x I)(\partial_{t}+\sigma_y I)$ 
appearing in (\ref{eq:pml-sd_6}) has been approximated by 
$D_{\Delta t}^{\sigma_{x}} D_{\Delta t}^{\sigma_{y}}$, which is in
 some sense more natural than the discretization used in (\ref{eq:pml-sys-discr-A_6}). 

We can  show the stability result: 
\begin{theo}\label{th:B}
We assume that $\sigma_x$ and $\sigma_y$ are constants (not
necessarily equal). The discrete scheme B is stable if the matrix
\begin{equation}\label{eq:cond_B}
{\cal B}- \dps \frac{\Delta t^2}{4}R^{*}M^{-1} R 
\end{equation}
is positive definite.
\end{theo}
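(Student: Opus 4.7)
The plan is to reduce scheme B to a single second-order in time equation in $\bVs_h$ alone, paralleling the proof of Theorem~\ref{th:A} with the roles of $P$ and $V$ exchanged; this dualization is what makes $R^{*}M^{-1}R$ appear in place of $K=R{\cal B}^{-1}R^{*}$ and produces the ``dual'' stability condition (\ref{eq:cond_B}). Applying $D_{\Delta t}$ to (\ref{eq:pml-sys-discr-B_1}), then eliminating $M(D_{\Delta t}^{2}\Ps_h)^{n}$ via (\ref{eq:pml-sys-discr-B_6}) and $(D_{\Delta t}\bVs_h)^{n}$ via (\ref{eq:pml-sys-discr-B_4})-(\ref{eq:pml-sys-discr-B_5}), I obtain
\[
M(D_{\Delta t}^{\sigma_x}D_{\Delta t}^{\sigma_y}P_h)^{n} + R_x(D_{\Delta t}^{\sigma_y}V_{x,h})^{n} + R_y(D_{\Delta t}^{\sigma_x}V_{y,h})^{n} = 0.
\]
Since $\sigma_x$ and $\sigma_y$ are constant, the four discrete time operators $D_{\Delta t}, I_{\Delta t}, D_{\Delta t}^{\sigma_x}, D_{\Delta t}^{\sigma_y}$ commute pairwise and with the time-independent matrices $R_\alpha, M, B$. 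Applying $R_x^{*}M^{-1}$ to this identity, then using (\ref{eq:pml-sys-discr-B_2}) shifted by $D_{\Delta t}^{\sigma_y}$ together with (\ref{eq:pml-sys-discr-B_4}) to rewrite $R_x^{*}(D_{\Delta t}^{\sigma_x}D_{\Delta t}^{\sigma_y}P_h)^{n}$ as $B((D_{\Delta t}^{\sigma_x})^{2} D_{\Delta t}\Vs_{x,h})^{n}$, and proceeding analogously for the $y$-component, one factors $D_{\Delta t}$ out to obtain the two discrete conservation laws
\[
D_{\Delta t}\!\left[B(D_{\Delta t}^{\sigma_\alpha})^{2}\Vs_{\alpha,h} + R_\alpha^{*}M^{-1}R\,\bVs_h\right]^{n} = 0, \qquad \alpha\in\{x,y\}.
\]
For initial data compatible with scheme~B, the integration constants vanish and the bracketed quantities are identically zero: this is a closed second-order scheme in $\bVs_h$ only.

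Second, I would perform the standard energy analysis on this reduced system. Taking the Euclidian inner product of the $\alpha$-equation with $(D_{\Delta t}I_{\Delta t}\Vs_{\alpha,h})^{n+1/2}$ and summing over $\alpha$, the expansion $(D_{\Delta t}^{\sigma_\alpha})^{2} = D_{\Delta t}^{2} + 2\sigma_\alpha D_{\Delta t}I_{\Delta t} + \sigma_\alpha^{2} I_{\Delta t}^{2}$ and the telescoping identity $(D_{\Delta t}U\,|\,I_{\Delta t}U)_B = \tfrac12 D_{\Delta t}(\norma{U}_B^{2})$ (applied to $U = D_{\Delta t}\Vs_{\alpha,h}$ and $U = I_{\Delta t}\Vs_{\alpha,h}$) turn the $B$-contributions into a discrete time derivative plus the strictly dissipative term $2(\sigma_x\norma{D_{\Delta t}I_{\Delta t}\Vs_{x,h}}_B^{2} + \sigma_y\norma{D_{\Delta t}I_{\Delta t}\Vs_{y,h}}_B^{2})$. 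By symmetry of $A:=R^{*}M^{-1}R$, the coupling term also telescopes, yielding the identity
\[
\frac{\mathcal{E}_{\mathrm{B},h}^{n+1}-\mathcal{E}_{\mathrm{B},h}^{n}}{\Delta t} = -2\!\left(\sigma_x\norma{(D_{\Delta t}I_{\Delta t}\Vs_{x,h})^{n+\frac12}}_B^{2} + \sigma_y\norma{(D_{\Delta t}I_{\Delta t}\Vs_{y,h})^{n+\frac12}}_B^{2}\right) \le 0,
\]
with
\[
\mathcal{E}_{\mathrm{B},h}^{n} = \tfrac12\!\left\{\norma{(D_{\Delta t}\bVs_h)^{n}}_{\cal B}^{2} + \sigma_x^{2}\norma{(I_{\Delta t}\Vs_{x,h})^{n}}_B^{2} + \sigma_y^{2}\norma{(I_{\Delta t}\Vs_{y,h})^{n}}_B^{2} + (A\bVs_h^{n-\frac12}\,|\,\bVs_h^{n+\frac12})\right\}.
\]

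Finally, applying the polarization identity $(AU\,|\,V) = \norma{\tfrac{U+V}{2}}_A^{2}-\tfrac14\norma{V-U}_A^{2}$ with $U=\bVs_h^{n-\frac12}$, $V=\bVs_h^{n+\frac12}$ converts the last summand into $\norma{(I_{\Delta t}\bVs_h)^{n}}_A^{2} - \tfrac{\Delta t^{2}}{4}\norma{(D_{\Delta t}\bVs_h)^{n}}_A^{2}$, so that
\[
\mathcal{E}_{\mathrm{B},h}^{n} = \tfrac12\!\left\{\norma{(D_{\Delta t}\bVs_h)^{n}}_{{\cal B}-\frac{\Delta t^{2}}{4}R^{*}M^{-1}R}^{2} + \norma{(I_{\Delta t}\bVs_h)^{n}}_{R^{*}M^{-1}R}^{2} + \sigma_x^{2}\norma{(I_{\Delta t}\Vs_{x,h})^{n}}_B^{2} + \sigma_y^{2}\norma{(I_{\Delta t}\Vs_{y,h})^{n}}_B^{2}\right\}.
\]
All summands except the first are automatically nonnegative, so $\mathcal{E}_{\mathrm{B},h}^{n}\ge 0$ precisely under (\ref{eq:cond_B}); combined with the dissipative identity, this gives stability of scheme~B. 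The main obstacle I anticipate is step~1: the careful parity (integer vs.\ half-integer time) bookkeeping when commuting the shifted difference operators across matrix products, and checking that the integration constants produced by factoring $D_{\Delta t}$ vanish for the initial data built into the scheme. Once the reduced equation is in hand, the energy argument is essentially dual to the one used for scheme~A, with $M \leftrightarrow {\cal B}$ and $K \leftrightarrow R^{*}M^{-1}R$ exchanged.
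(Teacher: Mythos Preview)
Your approach is correct but genuinely different from the paper's. The paper does \emph{not} first reduce the system to a closed scheme in $\bVs_h$. Instead, it applies $D_{\Delta t}^{2}I_{\Delta t}$ to (\ref{eq:pml-sys-discr-B_1}) and tests the result against $(D_{\Delta t}^{2}\Ps_h)^{n}$; the term $(R\,D_{\Delta t}^{2}I_{\Delta t}\bVs_h\,|\,D_{\Delta t}^{2}\Ps_h)$ is then transformed via (\ref{eq:pml-sys-discr-B_6}) and (\ref{eq:pml-sys-discr-B_2})--(\ref{eq:pml-sys-discr-B_5}) into $B$-norms of $\bVs_h$, producing a decreasing discrete energy at order two, namely one built from $D_{\Delta t}^{2}\Ps_h$, $D_{\Delta t}^{2}\bVs_h$ and $I_{\Delta t}D_{\Delta t}\Vs_{\alpha,h}$. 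Only at the very end does the paper use (\ref{eq:pml-sys-discr-B_1}) once more to replace $M(D_{\Delta t}^{3}\Ps_h)$ by $-R(D_{\Delta t}^{2}\bVs_h)$, which is what makes ${\cal B}-\tfrac{\Delta t^{2}}{4}R^{*}M^{-1}R$ appear.

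Your route---eliminating $P_h,\Ps_h$ first, factoring $D_{\Delta t}$, and running the energy argument on the resulting second-order scheme in $\bVs_h$---yields an energy one order \emph{lower} than the paper's (it controls $D_{\Delta t}\bVs_h$ and $I_{\Delta t}\bVs_h$ rather than second differences), which is in principle a sharper statement. The price is exactly the obstacle you flag: after factoring $D_{\Delta t}$ you must argue that the integration constants vanish. The paper's higher-order energy identity holds for \emph{any} discrete data, without that compatibility hypothesis; your order-one energy needs either well-prepared initial data or a supplementary argument splitting off a bounded particular solution when the constants are nonzero. Either way the stability conclusion under (\ref{eq:cond_B}) is the same, so both proofs are valid---yours is more elementary and dual to Theorem~\ref{th:A} as you intend, while the paper's is slightly less economical but unconditional in the initial data.
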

\begin{proof}
We apply $D_{\Delta t}^2I_{\Delta t}$ to (\ref{eq:pml-sys-discr-B_1}) and
multiply with $D_{\Delta t}^2 \Ps_{h}$:
 \[
 \underbrace{(D_{\Delta t}I_{\Delta t} M D_{\Delta t}^2\Ps_{h}   \left. \right|  D_{\Delta
     t}^2 \Ps_{h})}_{(\mathrm{I})}+ \underbrace{(D_{\Delta t}^2I_{\Delta t} \bVs_h   \left. \right| 
     R^{*} D_{\Delta
     t}^2 \Ps_{h})}_{(\mathrm{II})}=0.
 \]
We develop each term $(\mathrm{I})$ and $(\mathrm{II})$ and separately:
\[
(\mathrm{I})= \frac{1}{2\Delta t} \left((D_{\Delta
     t}^2 \Ps_{h})^{n+1}-(D_{\Delta
     t}^2 \Ps_{h})^{n-1}  \left. \right|  (D_{\Delta
     t}^2 \Ps_{h})^{n}\right)_M, 
\]
and, since $\sigma_x$ and $\sigma_y$ are constants, using (\ref{eq:pml-sys-discr-B_6}) 
and then (\ref{eq:pml-sys-discr-B_2})-(\ref{eq:pml-sys-discr-B_5}), we obtain
\[\begin{array}{lcl}
(\mathrm{II}) &=& (D_{\Delta t}^2I_{\Delta t} \bVs_h  \left. \right|     D_{\Delta t}^{\sigma_x}
D_{\Delta t}^{\sigma_y}   R^{*} P_{h})\\[12pt]
&=& (D_{\Delta t}^2I_{\Delta t} \Vs_{x,h}   \left. \right|     D_{\Delta t}^{\sigma_x}
D_{\Delta t}^{\sigma_y}  B D_{\Delta t}^{\sigma_x}  V_{x,h} ) + 
(D_{\Delta t}^2I_{\Delta t} \Vs_{y,h}  \left. \right|     D_{\Delta t}^{\sigma_x}
D_{\Delta t}^{\sigma_y}  B D_{\Delta t}^{\sigma_y}  V_{y,h} )
\\[12pt]
&=& (D_{\Delta t}^2I_{\Delta t} \Vs_{x,h}  \left. \right|     (D_{\Delta t}^{\sigma_x})^2
   D_{\Delta t}  \Vs_{x,h} )_B + 
(D_{\Delta t}^2I_{\Delta t} \Vs_{y,h}  \left. \right|     
(D_{\Delta t}^{\sigma_y})^2   D_{\Delta t}  \Vs_{y,h} )_B.
\end{array}
\]
The last two terms in $(\mathrm{II})$ can be rewritten as a difference
of norms. For instance, the first one leads to
\[
\begin{array}{lcl}
(D_{\Delta t}^2I_{\Delta t} \Vs_{x,h} \left. \right|     (D_{\Delta t}^{\sigma_x})^2
   D_{\Delta t}  \Vs_{x,h} )_B&=& (D_{\Delta t}^2I_{\Delta t} \Vs_{x,h}  \left. \right| 
   D_{\Delta t}^3 \Vs_{x,h} )_B + 2 \sigma_x \left\| D_{\Delta
   t}^2I_{\Delta t} \Vs_{x,h}\right\|_B^2\\[12pt]
& & + \sigma_x^2 (D_{\Delta  t}^2I_{\Delta t} \Vs_{x,h}  \left. \right| D_{\Delta t}I_{\Delta t}^2
   \Vs_{x,h})_B\\[12pt]
&=& \dps \frac{1}{2\Delta t} \left( 
\left\| (D_{\Delta  t}^2  \Vs_{x,h})^{n+\frac12} \right\|^2_B - \left\| (D_{\Delta  t}^2  \Vs_{x,h})^{n-\frac12} \right\|^2_B
\right)\\[12pt]
&& + 2 \sigma_x \left\| (D_{\Delta
   t}^2I_{\Delta t} \Vs_{x,h})^n\right\|_B^2\\[12pt]
&& \hspace*{-0.5cm} + \dps \frac{\sigma_x^2}{2\Delta t} \left(  
\left\| (I_{\Delta t} D_{\Delta  t}  \Vs_{x,h})^{n+\frac12} \right\|^2_B - \left\| (I_{\Delta t} D_{\Delta  t}  \Vs_{x,h})^{n-\frac12}\right\|^2_B.
\right)
\end{array}
\]
Hence, if the discrete energy is defined by
\begin{equation}
\label{eq:discrete-energy-B}
\begin{array}{lcl}
\mathcal{E}_{\mathrm{B},h}^{n+\frac12}&=& \dps \frac12\left\{
\left((D_{\Delta t}^2 \Ps_{h})^{n+1}  \left. \right|   (D_{\Delta t}^2
  \Ps_{h})^{n}\right)_M 
+ \left\| (D_{\Delta  t}^2  \bVs_h)^{n+\frac12} \right\|^2_B \right.\\[12pt]
&& \dps \left. +
\sigma_x^2 \left\| (I_{\Delta t} D_{\Delta  t}  \Vs_{x,h})^{n+\frac12}
\right\|^2_B + \sigma_y^2 \left\| (I_{\Delta t} D_{\Delta  t}  \Vs_{y,h})^{n+\frac12}
\right\|^2_B
\right\},
\end{array}
\end{equation}
then we have the identity
\[
\frac{\mathcal{E}_{\mathrm{B},h}^{n+\frac12}-\mathcal{E}_{\mathrm{B},h}^{n-\frac12}
  }{\Delta t}= - 2 \sigma_x \left\| (D_{\Delta
   t}^2I_{\Delta t} \Vs_{x,h})^n\right\|_B^2- 2 \sigma_y \left\| (D_{\Delta
   t}^2I_{\Delta t} \Vs_{y,h})^n\right\|_B^2 \le 0.
\]
In order to point out the stability condition we finally rewrite the
discrete energy (\ref{eq:discrete-energy-B}) as 
\[
\begin{array}{l} \dps 
\mathcal{E}_{\mathrm{B},h}^{n+\frac12}=\frac12\left\{
\left\| (I_{\Delta t} D_{\Delta t}^2 \Ps_{h})^{n+\frac12} \right\|^2_M
- \frac{\Delta t^2}{4}  \left\| ( D_{\Delta t}^3 \Ps_{h})^{n+\frac12} \right\|^2_M
+ \left\| (D_{\Delta  t}^2  \bVs_h)^{n+\frac12} \right\|^2_B \right.\\[12pt]
\left. +
\sigma_x^2 \left\| (I_{\Delta t} D_{\Delta  t}  \Vs_{x,h})^{n+\frac12}
\right\|^2_B + \sigma_y^2 \left\| (I_{\Delta t} D_{\Delta  t}  \Vs_{y,h})^{n+\frac12}
\right\|^2_B
\right\}.\end{array}
\]
Now, applying $D_{\Delta t}$ to (\ref{eq:pml-sys-discr-B_1}), it holds
\[
M ( D_{\Delta t}^3 \Ps_{h})^{n+\frac12} = -R (D_{\Delta t}^2
\bVs_h)^{n+\frac12}, 
\]
and so
\begin{multline*} 
\dps \mathcal{E}_{\mathrm{B},h}^{n+\frac12}=\frac12\left\{
\left\| (I_{\Delta t} D_{\Delta t}^2 \Ps_{h})^{n+\frac12} \right\|^2_M
+ ( (B -\frac{\Delta t^2}{4} R^{*} M^{-1}R)  (D_{\Delta  t}^2  \bVs_h)^{n+\frac12} \left. \right|  (D_{\Delta  t}^2  \bVs_h)^{n+\frac12})  \right.\\
\left. +
\sigma_x^2 \left\| (I_{\Delta t} D_{\Delta  t}  \Vs_{x,h})^{n+\frac12}
\right\|^2_B + \sigma_y^2 \left\| (I_{\Delta t} D_{\Delta  t}  \Vs_{y,h})^{n+\frac12}
\right\|^2_B
\right\}.
\end{multline*}
Finally, since the matrix (\ref{eq:cond_B}) is assumed positive definite, 
we obtain $\mathcal{E}_{\mathrm{B},h}^{n+\frac12}\ge 0$ and conclude 
the stability of the scheme.
\end{proof}

\begin{Cor}
If the discrete scheme B is used, then the CFL stability condition 
(\ref{eq:cond_B}) holds in the PML corner and in the fluid domain.
\end{Cor}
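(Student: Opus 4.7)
The plan is to show that the positive-definiteness condition (\ref{eq:cond_B}) governing scheme B in the PML corner is \emph{equivalent} to the positive-definiteness condition $M - (\Delta t^2/4)K$, with $K = R \mathcal{B}^{-1} R^{*}$, that was obtained in the fluid domain. Once this equivalence is established, the corollary is immediate, since the same $\Delta t$ passes both tests simultaneously.

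To carry this out, I would reformulate each condition as a bound on the largest generalized eigenvalue of a symmetric pencil. The fluid condition is equivalent to
\[
\lambda_{\max}\!\bigl(R \mathcal{B}^{-1} R^{*},\, M\bigr) < \frac{4}{\Delta t^{2}},
\]
while (\ref{eq:cond_B}) is equivalent to
\[
\lambda_{\max}\!\bigl(R^{*} M^{-1} R,\, \mathcal{B}\bigr) < \frac{4}{\Delta t^{2}}.
\]
Since $M$ and $\mathcal{B}$ are symmetric positive definite (mass lumping gives block-diagonal, strictly positive matrices), they admit positive square roots $M^{1/2}$ and $\mathcal{B}^{1/2}$. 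Setting $T = M^{-1/2} R \, \mathcal{B}^{-1/2}$, one checks that
\[
T T^{*} = M^{-1/2}\bigl(R \mathcal{B}^{-1} R^{*}\bigr) M^{-1/2},
\qquad
T^{*} T = \mathcal{B}^{-1/2}\bigl(R^{*} M^{-1} R\bigr) \mathcal{B}^{-1/2}.
\]
The two generalized eigenvalue problems above therefore reduce respectively to the eigenvalues of $TT^{*}$ and of $T^{*}T$, which are known to share the same nonzero spectrum. Hence the two largest eigenvalues coincide, and the two CFL inequalities are \emph{identical} numerical constraints on $\Delta t$.

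The only subtlety, and the step to be written most carefully, is that $R$ is rectangular (of size $N_{\cal P}\times 2 N_{\cal V}$), so $T T^{*}$ and $T^{*} T$ act on spaces of different dimensions; the identification of spectra is valid only for the \emph{nonzero} eigenvalues, but this is precisely what is needed since one only controls $\lambda_{\max}$. I would make this explicit by noting that if $T^{*}T v = \lambda v$ with $\lambda>0$, then $T v \neq 0$ and $TT^{*}(Tv) = \lambda (Tv)$, and conversely. No additional estimate is required; the corollary is a purely algebraic consequence of Theorem~\ref{th:B} and the characterization of the fluid-domain CFL recalled in Section~\ref{sec:approx-time}.
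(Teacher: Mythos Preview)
Your proposal is correct and follows essentially the same route as the paper: both arguments reduce the two CFL conditions to bounds on the largest generalized eigenvalues of the pencils $(R\mathcal{B}^{-1}R^{*},M)$ and $(R^{*}M^{-1}R,\mathcal{B})$, and then show these pencils share their nonzero spectrum. The only cosmetic difference is that the paper works directly with the unsymmetrized generalized eigenvalue equations (multiplying $R\mathcal{B}^{-1}R^{*}v_j=\lambda_j M v_j$ by $\mathcal{B}^{-1}R^{*}M^{-1}$ to obtain an eigenvector $w_j=\mathcal{B}^{-1}R^{*}v_j$ of the second problem), whereas you first symmetrize via $M^{1/2}$, $\mathcal{B}^{1/2}$ and invoke the standard $TT^{*}/T^{*}T$ identity; the underlying linear-algebra fact is identical.
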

\begin{proof}
We recall that the CFL condition in the fluid domain is given by requiring
$M-\frac{\Delta t^2}{4} R {\cal B}^{-1} R^{*}$ is definite positive
whereas the stability condition in the PML corner is obtained
by assuming ${\cal B}-\frac{\Delta t^2}{4} R^{*} M^{-1}R$ is definite positive.

If we denote by $(\lambda_j,v_j)$, $j=1,\ldots, N_{\cal P}$ the 
positive eigenvalues and the eigenvectors of problem
\begin{equation}
\label{eq:vp-1}
 R {\cal B}^{-1} R^{*} v_j=\lambda_j Mv_j,\qquad v_{j}\ne 0,
\end{equation}
the CFL condition in the fluid domain can be expressed as 
$\max_j \lambda_j < \frac{\Delta t^2}{4}$. 
Analogously, if we denote by $(\mu_j,w_j)$, $j=1,\ldots,2N_{\cal V}$ the eigenvalues and the eigenvectors of problem
\begin{equation}
\label{eq:vp-2}
 R^{*} {M}^{-1} R w_j=\mu_j {\cal B}w_j,\qquad w_{j}\ne 0,
\end{equation}
the CFL condition in the PML corner can be expressed as 
$\max_j \mu_j < \frac{\Delta t^2}{4}$. 
Then, it is easy to show that $\lambda_j$ is a nonzero eigenvalue of 
(\ref{eq:vp-1}) if and only if it is also an eigenvalue of (\ref{eq:vp-2}).
In fact, let us assume that $(\lambda_j,v_j)$ is an eigenvalue and 
eigenvector associated to (\ref{eq:vp-1}). If we multiply (\ref{eq:vp-1}) 
by ${\cal B}^{-1}R^*M^{-1}$ and define $w_j={\cal B}^{-1}R^*v_j$, we have
${\cal B}^{-1}R^*M^{-1}Rw_j=\lambda_j w_j$ (obviously $w_{j}\ne 0$ since
$\lambda_{j}$ and $v_{j}$ are not null). Hence, it is clear that 
$\lambda_j$ is also an eigenvalue of problem (\ref{eq:vp-2}).
Reciprocally, we can use analogous arguments to show the equivalence 
between both eigenvalue problems. Hence, $\max_j \mu_j= \max_j \lambda_j$
and consequently the two CFL conditions coincide. 
\end{proof}

\section{Numerical illustration}
\label{sec:num}
We consider the free propagation of waves generated by a compact supported 
initial condition (Ricker impulse, e.g. \cite{fauqueux}) for the pressure 
field centered at the point source $(17,17)$, a central frequency equal 
to $1$ and a spectral ratio equal to $0.5$. The computational domain is 
$[-2, 20]^2$ and the thickness of the Cartesian PML is $2$ (so that 
the physical domain is $[ 0, 18]^2$). 
The physical parameters are $\rho=1$, $\mu=1$.

The aim of this section is to illustrate numerically the sufficient 
conditions stated in Theorems \ref{th:A} and \ref{th:B}
with two different spatial discretizations. We consider a spectral finite 
element method based on $Q_r-$Lagrange rectangular piecewise continuous 
elements for the pressure fields $P$ and $\Ps$ 
and piecewise discontinuous elements for the velocity fields $\bV$ and $\bVs$
on uniform grids. The numerical experiments are performed with $r=1$ and $r=5$. 

Remind that the theoretical CFL condition has been established in the case 
of a constant absorbing function. In the following experiments, we 
first  try to recover these results numerically for constant damping functions: 
\begin{equation}
\label{sigma}
\sigma_{x} (x)= \begin{cases}
0 &  \mbox{if}  \quad x \in [0,18] \\
\sigma & \mbox{otherwise}
\end{cases} \quad ; 
 \quad
\sigma_{y} (y)= \begin{cases}
0 &\mbox{if}  \quad y \in [0,18] \\
\sigma & \mbox{otherwise}
\end{cases} .
\end{equation}
Then, we consider a quadratic damping function, i.e. we replace the constant value $\sigma$ in 
(\ref{sigma}) by the quadratic profile which is  continuous at the inner boundary of the PML
 and whose upper bound is denoted by $\sigma^{*}$.

\paragraph{Second-order scheme and constant absorbing function.} 
For $r=1$, the positive definite condition (\ref{eq:cond_B})  for 
scheme B yields to the standard CFL condition (\ref{eq:cfl-pml-B-cp}),
which coincides with the original CFL condition for the wave equation 
and then is independent of the values of $\sigma_{x}$ and  $\sigma_{y}$. 
For scheme A, the CFL condition is expected to depend on the value of 
the absorbing function, as shown in (\ref{eq:cfl-pml-A-cp}). 

In Figures \ref{fig:cfl_q1_const_A} and \ref{fig:cfl_q1_const_B} the 
continuous lines illustrate the theoretical CFL condition for 
the discrete schemes A and B, for different values of the constant 
absorbing function  ($\sigma=1, 10, 25$) and therefore define the 
boundary of the stability region.
In both Figures and through the rest of this section, the markers 
(see the circles, triangles and squares in the plots) are located at 
points  $(h,\Delta t)$ corresponding to the largest value of 
$\Delta t$ for which  the schemes have been checked numerically stable 
in practice. 

In both cases (schemes A and B), the marks lie closed to the boundary of 
the stability region, so the numerical results confirm the predicted 
CFL condition.  For $\sigma=1$, the curve for scheme A is almost the 
same as the one for scheme B, which means that the CFL condition is 
very closed to the one in the physical domain, but this setup corresponds 
to a very low damping which in practice would require a very large 
PML thickness. As soon as the damping factor is large enough, one 
can see that the CFL condition is much more restrictive than the 
standard one.  

\begin{figure}[!ht]
\begin{center}
\begin{minipage}{7.0cm}
\begin{center}
\includegraphics[width=7cm,angle=0]{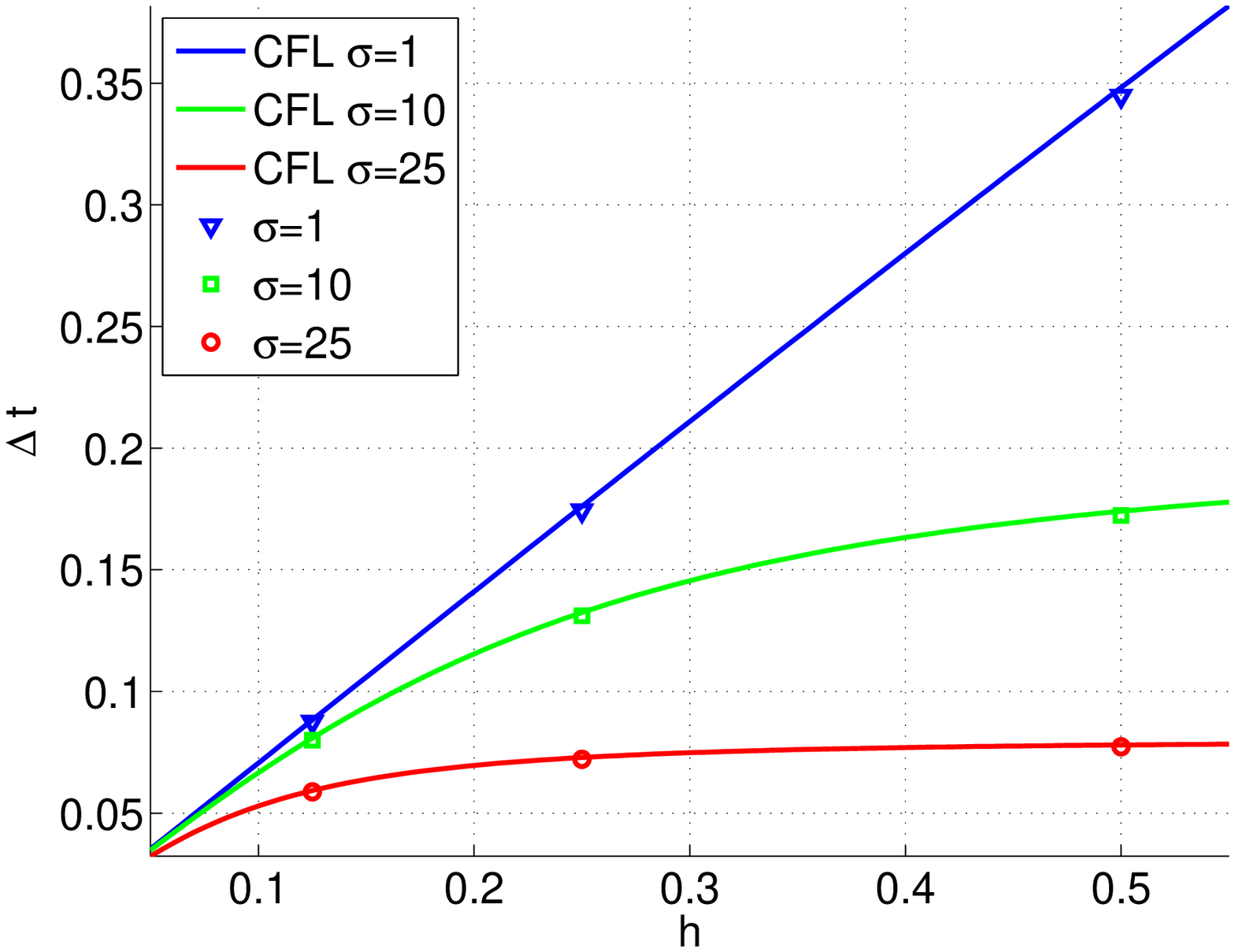} 
\vspace*{-.75cm}
\caption{CFL condition for scheme A with a $Q_1-Q_1^\disc$ 
discretization and a constant absorbing function.}
\label{fig:cfl_q1_const_A}
\end{center}
\end{minipage}
\hspace*{0.75cm}
\begin{minipage}{7.0cm}
\begin{center}
\includegraphics[width=7cm,angle=0]{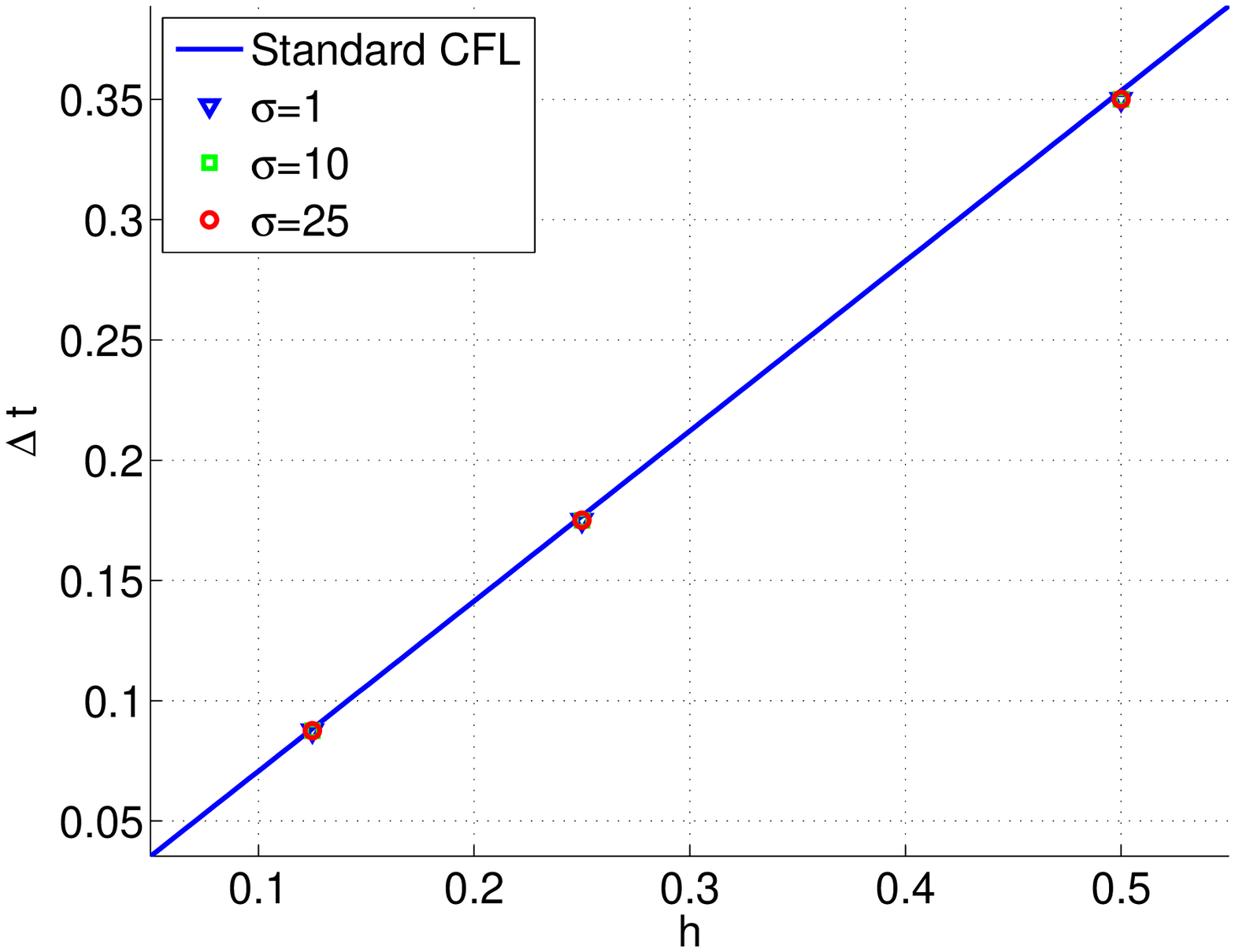} 
\vspace*{-.75cm}
\caption{CFL condition for scheme B with a $Q_1-Q_1^\disc$ 
discretization and a constant absorbing function.}
\label{fig:cfl_q1_const_B}
\end{center}
\end{minipage}
\end{center}
\end{figure}

\paragraph{Higher-order scheme and constant absorbing function.}   
It has been shown in \cite{fauqueux} that, for $r=5$, $cfl_{1,5}=0.1010$. 
According to (\ref{eq:cfl-fluid-3}), we thus have in 2D: 
$cfl_{2,5}=0.1010/\sqrt{2}$.  Therefore, since in our numerical test $c=1$, 
the CFL condition for the scheme A in the corner PML domain with a 
constant absorbing function $\sigma$, is given by  (\ref{eq:cfl-pml-A}) 
with $C=\sqrt{2}/0.1010$. In Figures~\ref{fig:cfl_q5_const_A} and 
\ref{fig:cfl_q5_const_B}, the results are similar to the one obtained 
with $r=1$,  again confirming the theoretical CFL conditions. 
In particular the CFL of scheme B coincides again with the standard one. 
However one notices that the CFL condition  of scheme A is less 
restrictive  than the one obtained with the lower order scheme. 

\begin{figure}[!ht]
\begin{center}
\begin{minipage}{7.0cm}
\begin{center}
\includegraphics[width=7cm,angle=0]{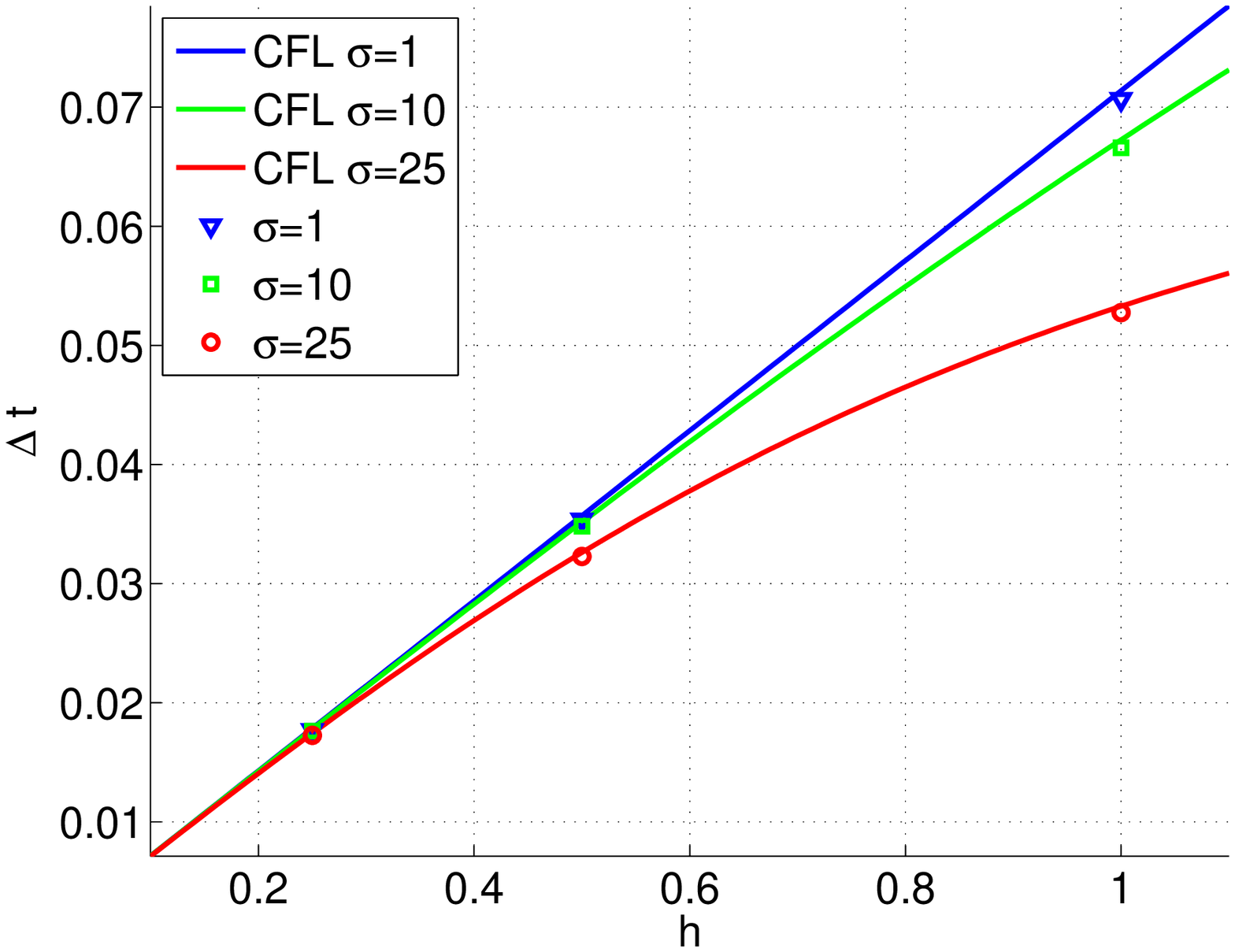} 
\vspace*{-.75cm}
\caption{CFL condition for scheme A with a $Q_5-Q_5^\disc$ 
discretization and a constant absorbing function.}
\label{fig:cfl_q5_const_A}
\end{center}
\end{minipage}
\hspace*{0.75cm}
\begin{minipage}{7.0cm}
\begin{center}
\includegraphics[width=7cm,angle=0]{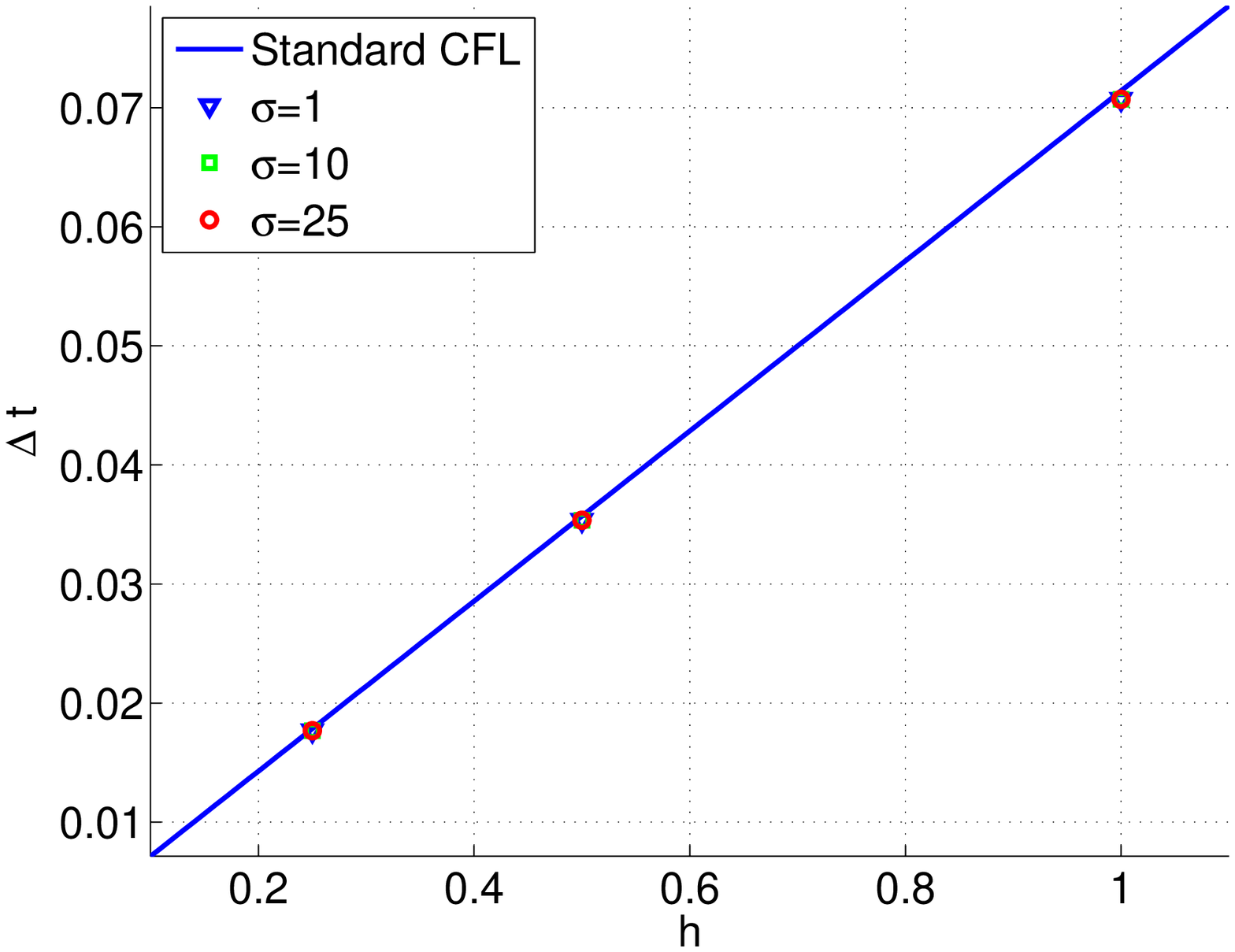} 
\vspace*{-.75cm}
\caption{CFL condition for scheme B with a $Q_5-Q_5^\disc$ 
discretization and a constant absorbing function.}
\label{fig:cfl_q5_const_B}
\end{center}
\end{minipage}
\end{center}
\end{figure}

\paragraph{Quadratic absorbing function.} In this paragraph, we consider 
a continuous quadratic absorbing function whose upper bound is given by 
$\sigma^{*}$. Notice that the theoretical CFL conditions have been 
obtained for a constant damping,  they are therefore not valid anymore 
in this case. However we compare in this paragraph : (i) the stability 
regions obtained with   the theoretical CFL  corresponding to a constant 
damping equal to $\sigma^{*}$ and (ii) the numerical stability region 
for the quadratic profile. 

Figure~\ref{fig:cfl_q1_quadr_B} shows that the CFL condition of 
scheme B, obtained with $Q_1-Q_1^\disc$ finite elements, is still 
independent of $\sigma^{*}$ and thus coincides with the standard 
CFL (\ref{eq:cfl-pml-B-cp}). These numerical results have been checked
also for $Q_5-Q_5^\disc$ finite elements, but since they are analogous
to those shown in Figure~\ref{fig:cfl_q1_quadr_B}, they have not been
included in the plots. This stability behavior, which was not 
guaranteed by the theoretical results, allows us to conjecture 
that the CFL of scheme B  always remains the standard one whatever 
the damping profile is. 
 
Figure \ref{fig:cfl_q1_quadr_A} (resp. \ref{fig:cfl_q5_quadr_A}) 
corresponds to numerical experiments performed with scheme A and $r=1$ 
(resp. $r=5$). In both cases the numerical stability regions still 
depend on the absorbing profile, but this time the markers are no 
longer in the interior of the theoretical stability regions. This could 
be expected, since the theoretical CFL condition has been obtained with 
the maximum value of the damping profile and therefore is more 
restrictive than the actual CFL condition of the scheme. 
As in the constant case,  the numerical CFL condition gets closer to 
the standard one, when higher-order elements are used. In practice 
however, since we cannot predict the CFL condition for a variable 
damping, scheme A is not very convenient to use and its stability 
condition is always more restrictive than the one of scheme B. 

\begin{figure}[!ht]
\begin{center}
\begin{minipage}{14.0cm}
\begin{minipage}{7.0cm}
\begin{center}
\includegraphics[width=7cm,angle=0]{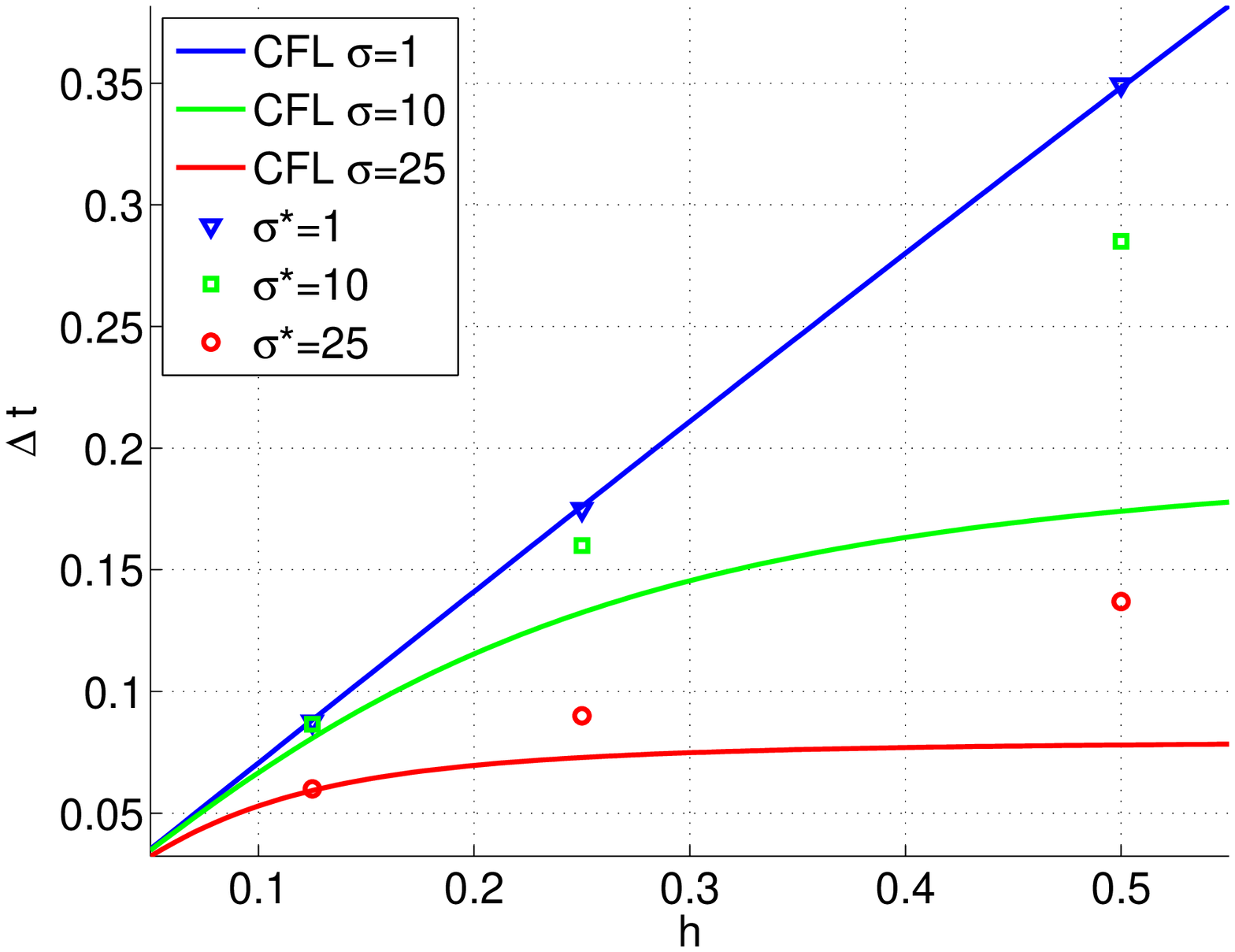} 
\vspace*{-.75cm}
\caption{CFL condition for scheme A with a $Q_1-Q_1^\disc$ 
discretization and a quadratic absorbing function.}
\label{fig:cfl_q1_quadr_A}
\end{center}
\end{minipage}
\hspace*{0.75cm}
\begin{minipage}{7.0cm}
\begin{center}
\includegraphics[width=7cm,angle=0]{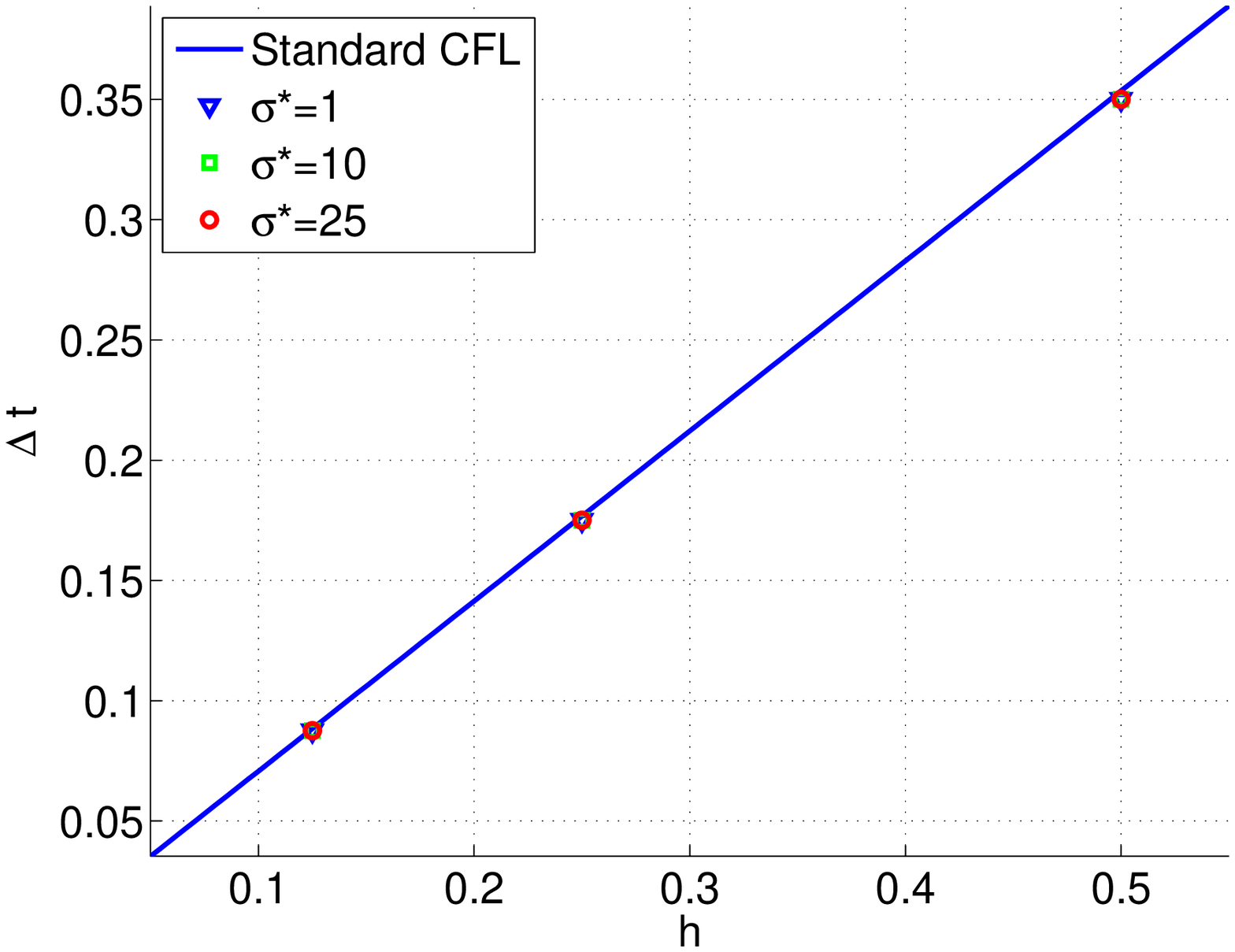} 
\vspace*{-.75cm}
\caption{CFL condition for scheme B with a $Q_1-Q_1^\disc$ 
discretization and a quadratic absorbing function.}
\label{fig:cfl_q1_quadr_B}
\end{center}
\end{minipage}\\
\vspace*{0.5cm}
\begin{center}
\begin{minipage}{7.0cm}
\begin{center}
\includegraphics[width=7cm,angle=0]{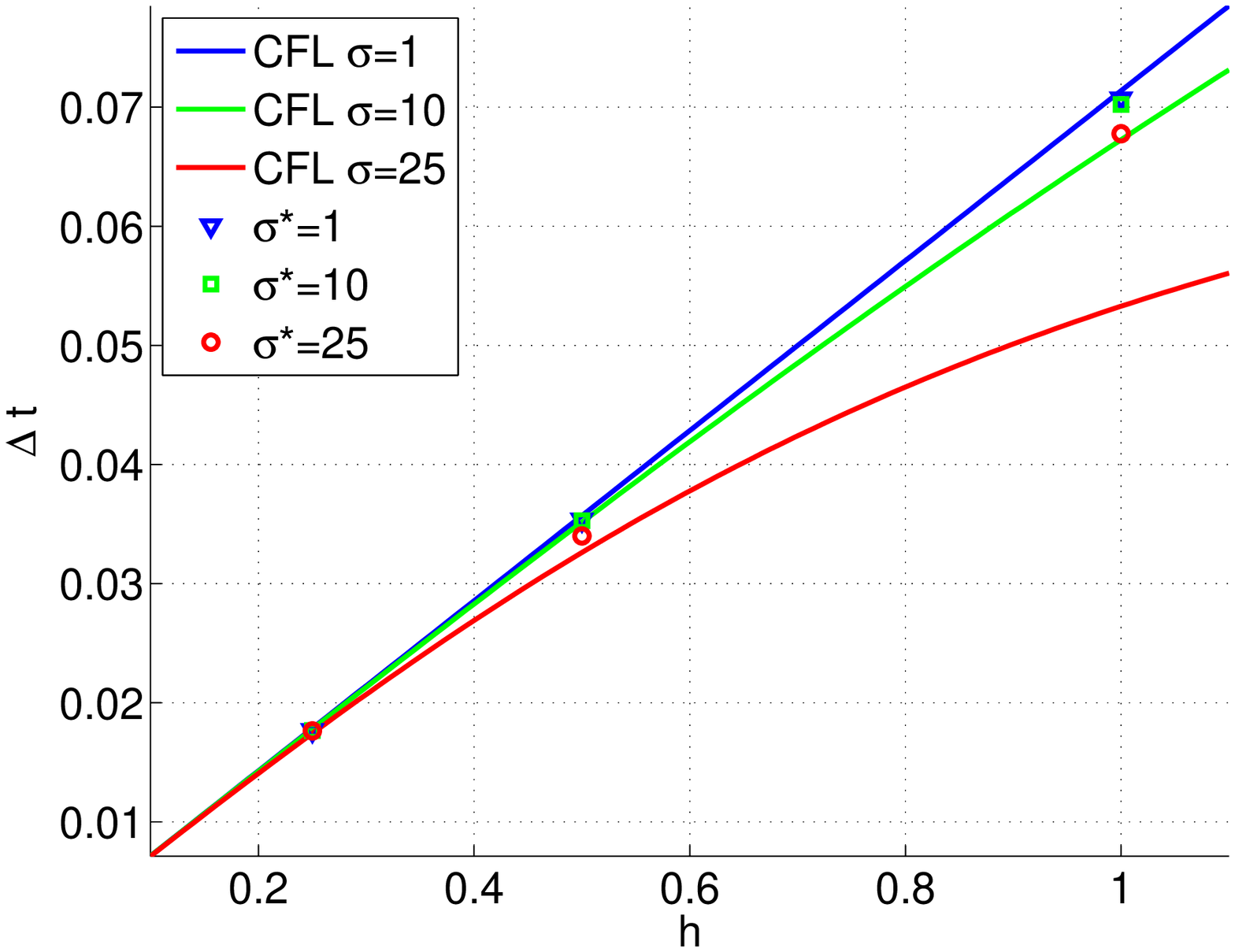} 
\vspace*{-.75cm}
\caption{CFL condition for scheme A with a $Q_5-Q_5^\disc$ 
discretization and a quadratic absorbing function.}
\label{fig:cfl_q5_quadr_A}
\end{center}
\end{minipage}
\end{center}
\end{minipage}
\end{center}
\end{figure}

\paragraph{A comparison between scheme A and scheme B.} 
Finally, to illustrate the qualitative difference between schemes A and 
B, we use a numerical example where scheme B is stable and scheme A is 
unstable. To this purpose, we present some snapshots of two numerical 
simulations at different time steps. We have used a $Q_1-Q_1^\disc$ 
discretization and a uniform grid with spatial sizes $\Delta x=\Delta 
y=0.5$, time step $\Delta t=0.2$. For the construction of the PML, 
we have used a constant absorbing functions with $\sigma=25$. 
In this case, since $\Delta t < h/\sqrt{2}=0.354$, the standard CFL 
holds in the physical domain and the PML corner domain for the scheme B.

\begin{figure}[!ht]
\begin{center}
$\begin{array}{ccc}
\hspace{-2cm}
\psfig{figure=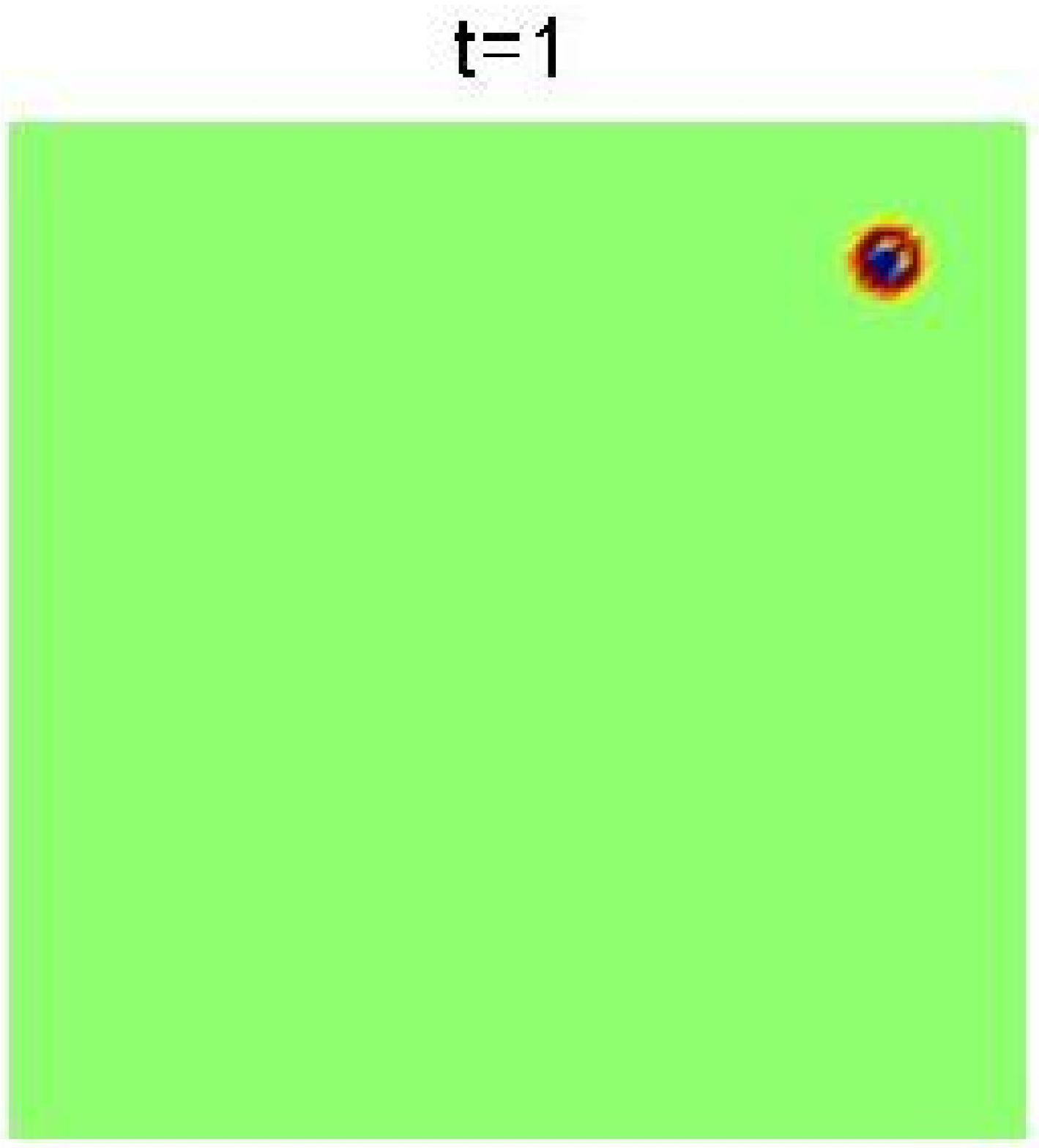,width=6.8cm,angle=0} &
\hspace{-1.5cm}
\psfig{figure=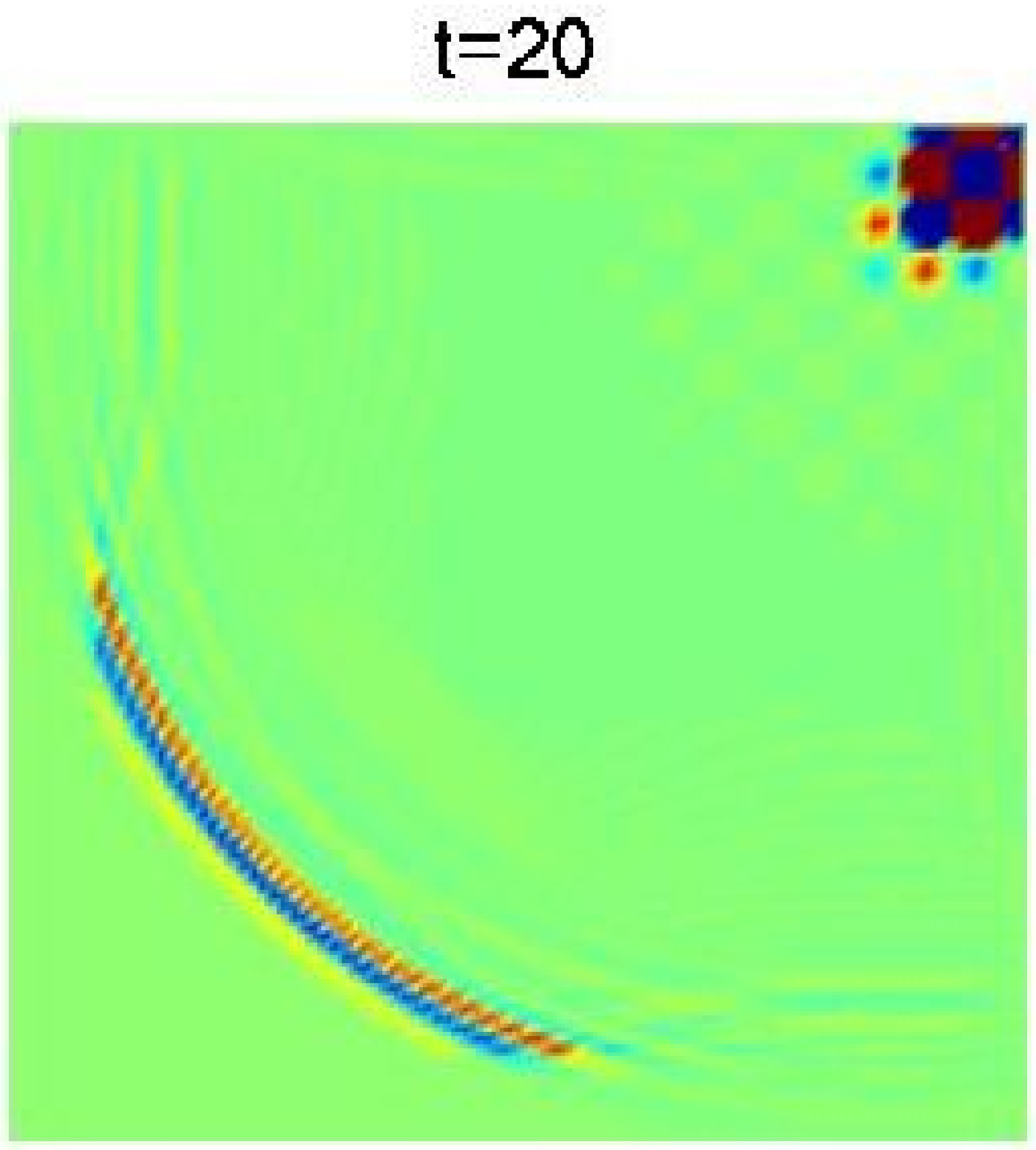,width=6.8cm,angle=0} &
\hspace{-1.5cm}
\psfig{figure=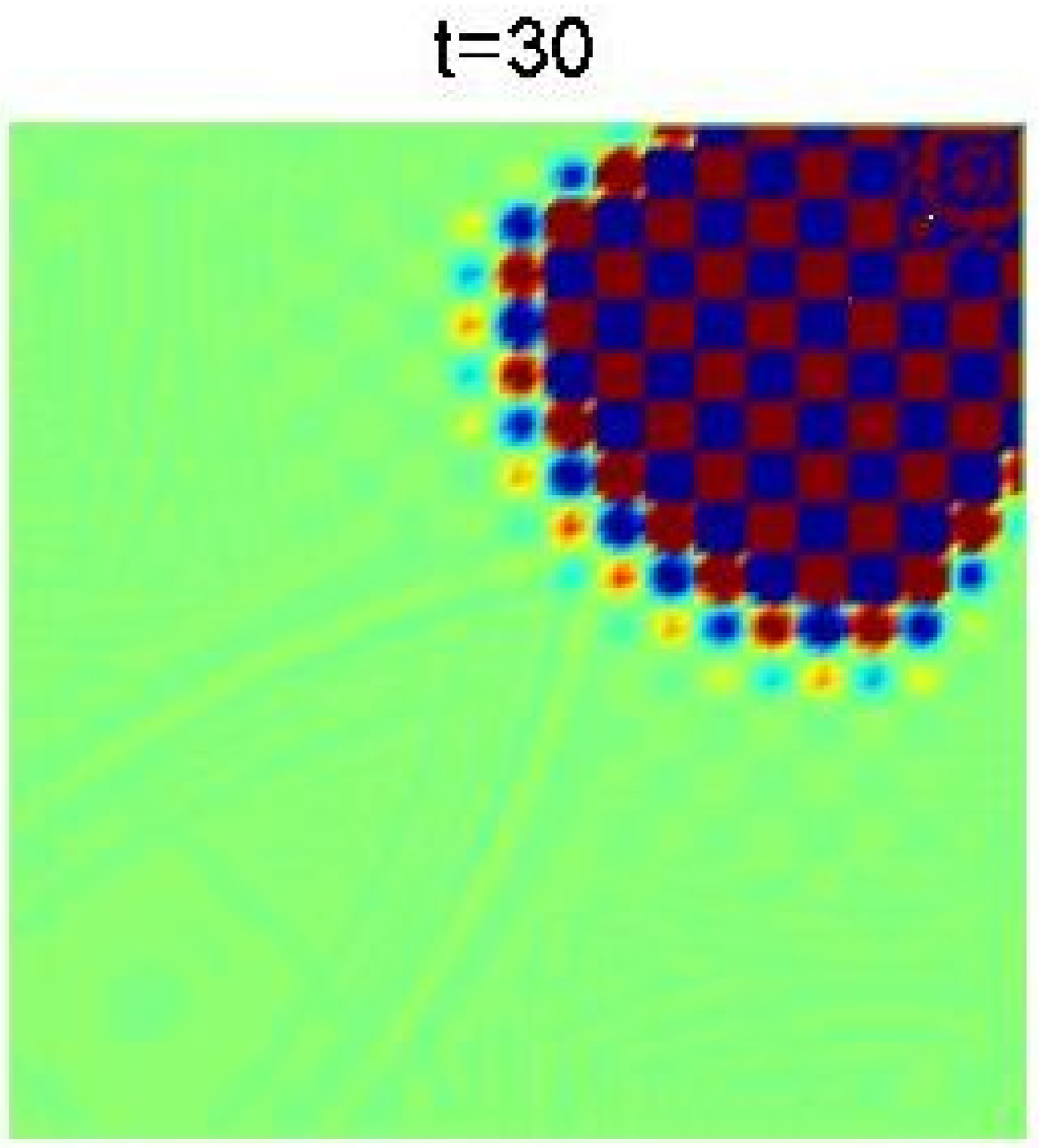,width=6.8cm,angle=0}
\end{array}$
\caption{Snapshots at time steps $n=1,\,20$ and $30$ for the scheme A.\label{fig:stab-A}}
\end{center}
\end{figure}

As expected, whereas the scheme B remains stable (see Figure \ref{fig:stab-B}), 
an instability arises by using the scheme A (see Figure \ref{fig:stab-A}). 
Actually, the more restrictive stability condition for the corner PML, 
which should be used for the scheme A, is here $\Delta t < 0.078$. 
Figure  \ref{fig:stab-A} shows that the instability starts at 
the corner PML which is the closest one to the compact support of the initial 
condition. Moreover this instability arises as soon as the wave 
penetrates the corner PML domain and corresponds to an exponential 
blow up of the numerical solution. 

\begin{figure}[!ht]
\begin{center}
$\begin{array}{ccc}
\hspace{-2cm}\psfig{figure=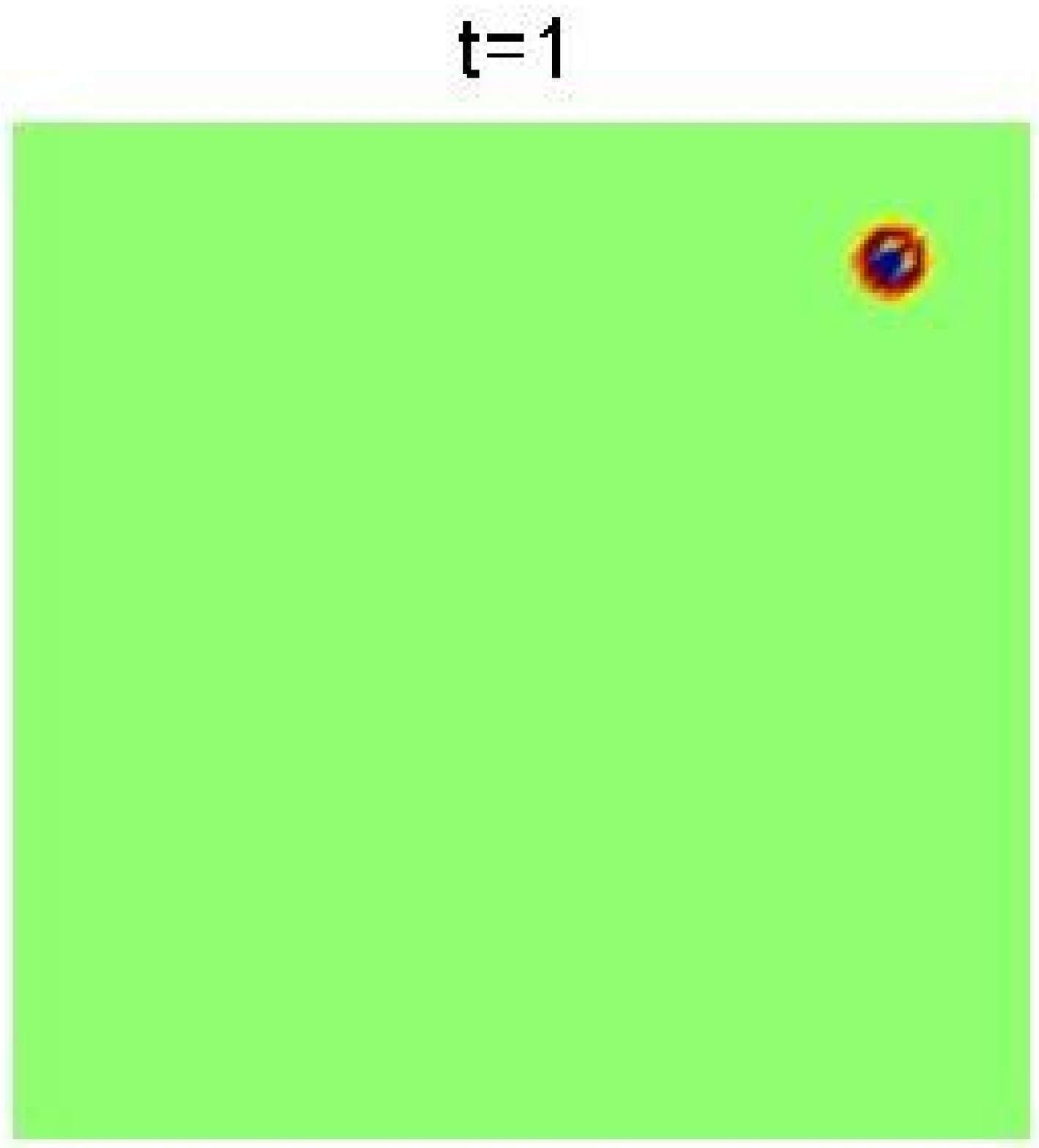,width=6.8cm,angle=0} &
\hspace{-1.5cm}\psfig{figure=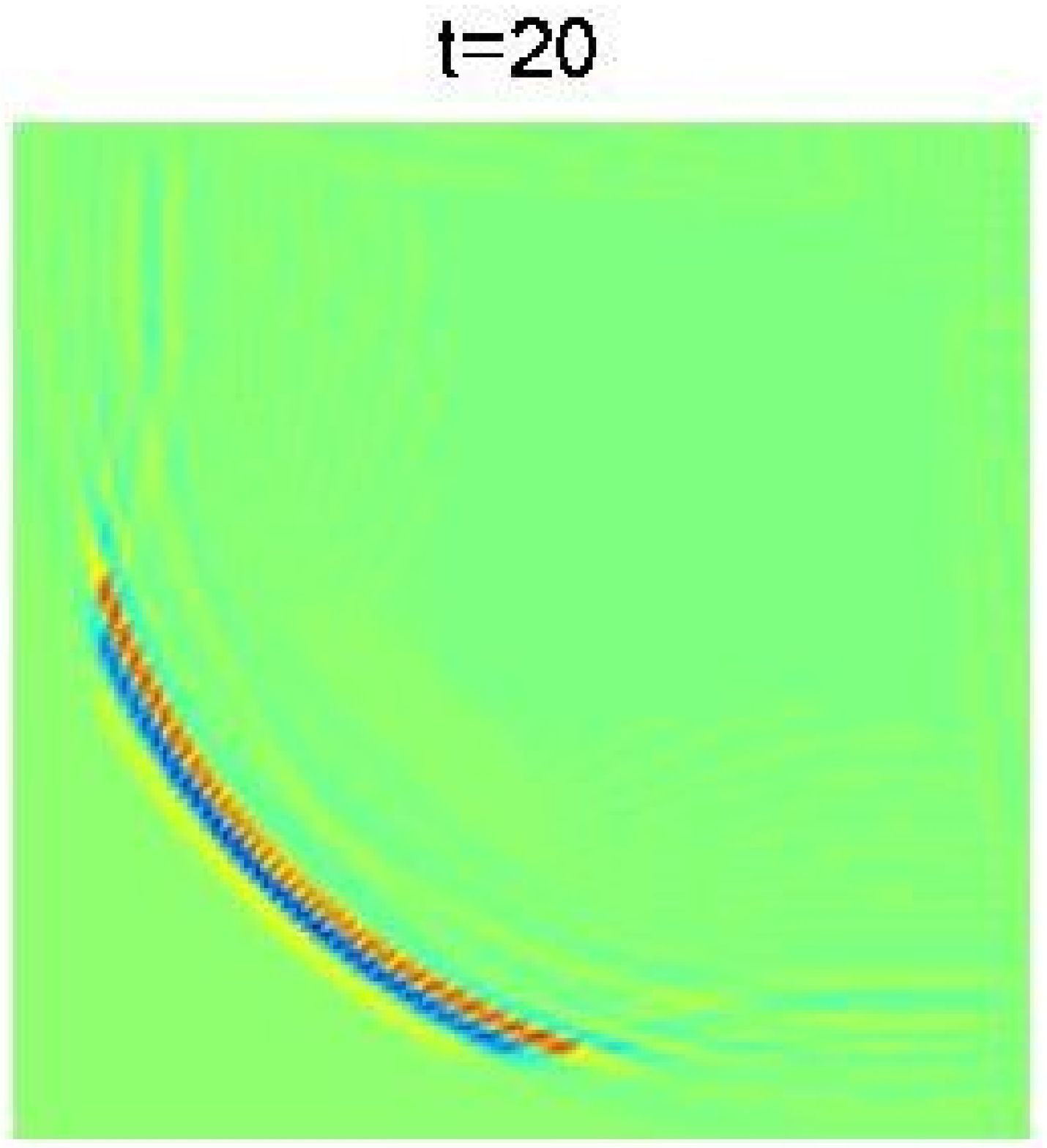,width=6.8cm,angle=0} &
\hspace{-1.5cm}\psfig{figure=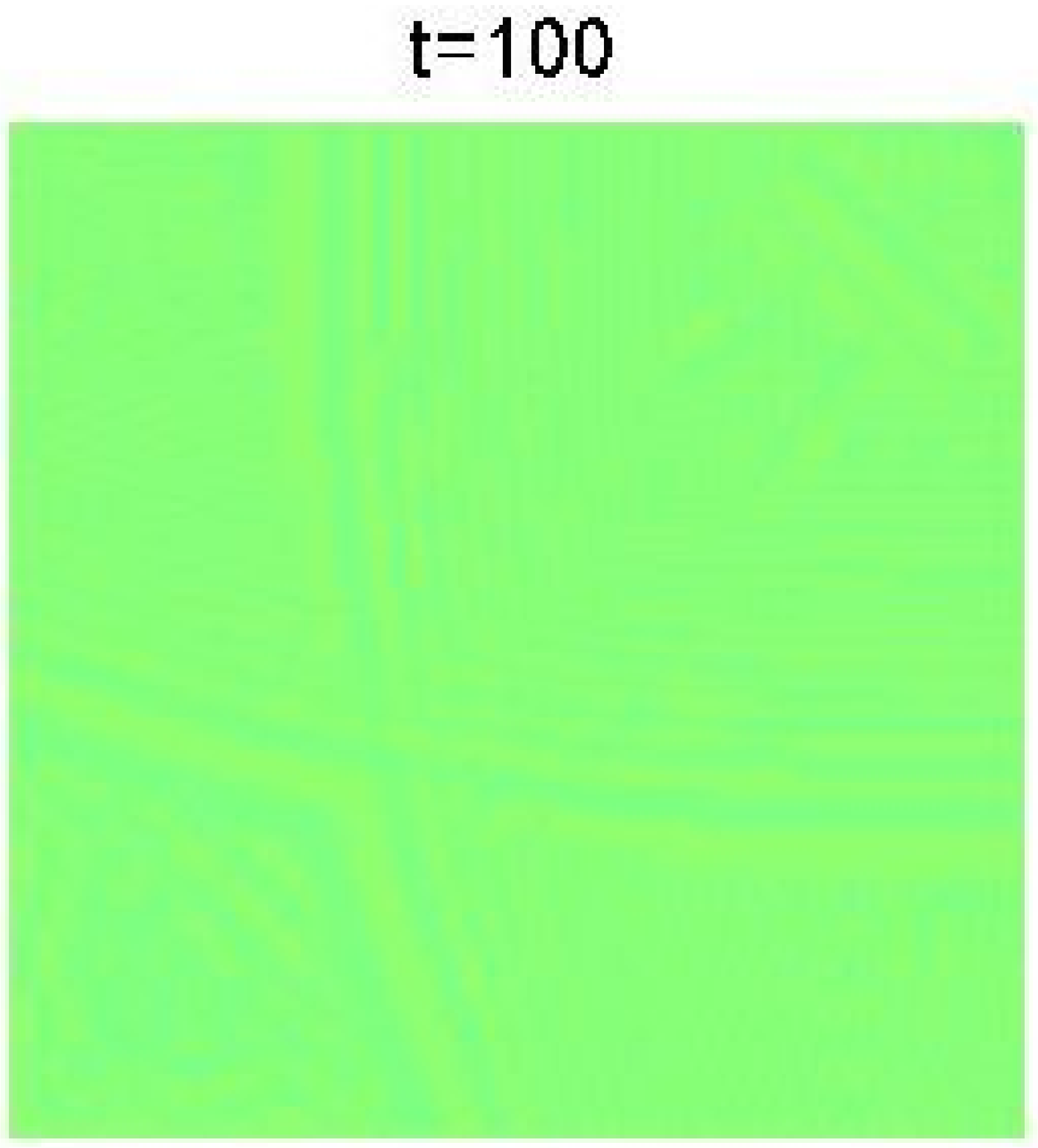,width=6.8cm,angle=0}
\end{array}$
\caption{Snapshots at time steps $t=1,\,20$ and $100$ for the scheme B.\label{fig:stab-B}}
\end{center}
\end{figure}

\section*{Conclusion} 
We have emphasized that the PML models in the Cartesian corner domains 
are always stable and dissipative on the continuous level. However, when
using the unsplit PMLs, one has to be careful on the time discretization
of the equation governing the additional  unknown stated in the corner 
domain. In this work we have shown that a bad choice for this 
discretization leads to a scheme which satisfies a restrictive stability
condition depending on the absorbing function. The numerical solution 
obtained with this scheme blows up exponentially in the corner when one
chooses the maximum time step  allowed by the  stability condition of 
the physical domain. This instability coming from the corner is not due
to a lack of stability of the continuous model and it can be avoided with 
a right choice of the discretization. This new scheme (labeled as ``B'')
leads to a CFL condition independent of the absorbing function and identical
to the stability condition of the discrete scheme in the physical domain.

\bibliography{biblio} 

\end{document}